\newtheorem{theorem}{Theorem}[section]
\newtheorem{remark}{Remark}[section]
\newtheorem{lemma}[theorem]{Lemma}
\newcommand{\bt}{\begin{theorem}}
	\newcommand{\bl}{\begin{lemma}}
		\newcommand{\el}{\end{lemma}}
	\newcommand{\et}{\end{theorem}}
\newcommand{\bn}{\begin{eqnarray}}
	\newcommand{\en}{\end{eqnarray}}
\newcommand{\bnn}{\begin{eqnarray*}}
	\newcommand{\enn}{\end{eqnarray*}}
\newcommand{\ba}{\begin{aligned}}
	\newcommand{\ea}{\end{aligned}}
\newcommand{\be}{\begin{equation}}
	\newcommand{\ee}{\end{equation}}
\newcommand{\bBV}{\boldsymbol{V}}
\newcommand{\Bn}{{\boldsymbol{n}}}
\newcommand{\Bt}{{\boldsymbol{\tau}}}
\newcommand{\Bu}{{\boldsymbol{u}}}
\newcommand{\Be}{{\boldsymbol{e}}}
\newcommand{\D}{{\boldsymbol{D}}}
\newcommand{\Ps}{\mathbf{\Psi}}
\newcommand{\Bx}{{\boldsymbol{x}}}
\newcommand{\OR}{{\mathscr{O}}_{R}}
\newcommand{\wtDR}{\widetilde{D_{R}}}
\newcommand{\wtOR}{\widetilde{\mathscr{O}_{R}}}
\begin{document}
		
		\title[Liouville-type theorems]
		{Liouville-type theorems for Axisymmetric solutions to steady  Navier-Stokes system in a  layer domain}

		\author{Jingwen Han}
		\address{School of Mathematical Sciences, Shanghai Jiao Tong University, 800 Dongchuan Road, Shanghai, China}
		\email{hjw126666@sjtu.edu.cn}

		\author{Yun Wang}
		\address{School of Mathematical Sciences, Center for dynamical systems and differential equations, Soochow University, Suzhou, China}
		\email{ywang3@suda.edu.cn}

		\author{Chunjing Xie}
		\address{School of mathematical Sciences, Institute of Natural Sciences,
			Ministry of Education Key Laboratory of Scientific and Engineering Computing,
			and CMA-Shanghai, Shanghai Jiao Tong University, 800 Dongchuan Road, Shanghai, China}
		\email{cjxie@sjtu.edu.cn}

		\begin{abstract}
			In this paper, we investigate the Liouville-type theorems for axisymmetric solutions to steady Navier-Stokes system in a  layer domain. The both cases for the flows supplemented  with no-slip boundary and Navier boundary  conditions are studied. If the width of the outlet grows at a rate less than $R^{\frac{1}{2}}$, any bounded solution is proved to be trivial. Meanwhile,  if the width of the outlet grows at a rate less than  $R^{\frac{4}{5}}$, every D-solution is proved to be trivial. The key idea of the proof  is to establish  a Saint-Venant  type estimate that characterizes the growth of Dirichlet integral of nontrivial solutions.
		\end{abstract}

		\keywords{Liouville-type theorem, steady Navier-Stokes system, no-slip boundary conditions, Navier boundary conditions, axisymmetry,  layer domain.}
		\subjclass[2010]{
			35B53,	 35Q30, 35B10, 35J67,  76D05}

		
		\maketitle

		\section{Introduction and Main Results}
		In this paper, we are interested with the Liouville-type theorems for solutions to the three-dimensional steady incompressible Navier-Stokes system,
		\begin{equation}\label{eqsteadyns}
			\left\{ \ba
			&-\Delta \Bu + (\Bu \cdot \nabla )\Bu + \nabla P  = 0, \ \ \ \ \
			&\mbox{in}\ \Omega,\\
			& \nabla \cdot \Bu =0,  \ \ \ \ \ &\mbox{in}\ \Omega,\\
			\ea \right.
		\end{equation}
		where the unknown function $\Bu=(u^1, u^2, u^3)$  is the velocity field,  $P$ is the pressure and  the domain $\Omega$ is assumed to be a three-dimensional  layer (see Figure \ref{Fig1-1}),
		\begin{equation}\label{domaapertu2}
			\Omega=\left\{\Bx=(x_{1}, x_{2}, x_{3})\in \mathbb{R}^{3}:  \Bx_{h}=(x_{1},x_{2})\in\mathbb{R}^{2}, \  f_{1}(\Bx_{h})<x_{3}<f_{2}(\Bx_{h}) \right\},
		\end{equation}
		where  $f_{i}(\Bx_{h})$ ($i=1,2$) is smooth. Denote $d=\inf\limits_{\Bx_{h}\in \mathbb{R}^2}\left(f_{2}(\Bx_{h})-f_{1}(\Bx_{h})\right)>0$. Without loss of generality, we assume that $d=1$. If $f_{1}(\Bx_{h})=0$ and $f_{2}(\Bx_{h})=1$, the domain $\Omega=\mathbb{R}^2 \times (0, 1)$ is called a three-dimensional slab.
		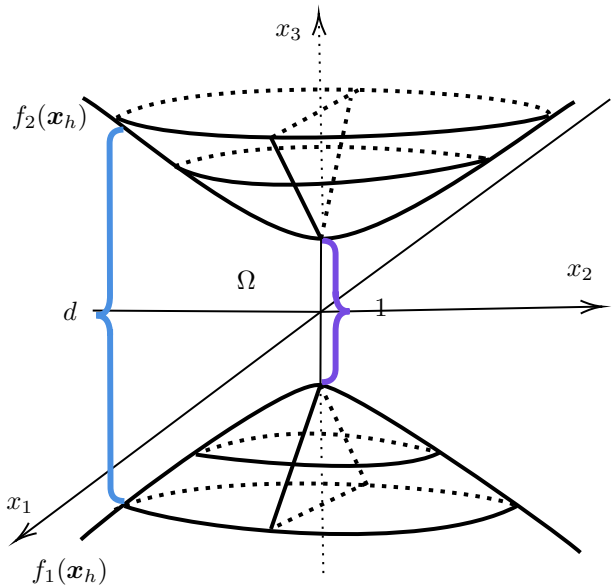
\begin{figure}[h]
			\begin{center}
				\centering

				\tikzset{every picture/.style={line width=0.75pt}} 
				
				\begin{tikzpicture}[x=0.75pt,y=0.75pt,yscale=-1,xscale=1]
					
					\draw    (309.33,250.33) -- (449.33,247.7) ;
					\draw [shift={(451.33,247.67)}, rotate = 178.92] [color={rgb, 255:red, 0; green, 0; blue, 0 }  ][line width=0.75]    (10.93,-3.29) .. controls (6.95,-1.4) and (3.31,-0.3) .. (0,0) .. controls (3.31,0.3) and (6.95,1.4) .. (10.93,3.29)   ;
					\draw    (309.33,250.33) -- (156.93,364.8) ;
					\draw [shift={(155.33,366)}, rotate = 323.09] [color={rgb, 255:red, 0; green, 0; blue, 0 }  ][line width=0.75]    (10.93,-3.29) .. controls (6.95,-1.4) and (3.31,-0.3) .. (0,0) .. controls (3.31,0.3) and (6.95,1.4) .. (10.93,3.29)   ;
					\draw  [dash pattern={on 0.84pt off 2.51pt}]  (310.33,201) -- (308.37,101.67) ;
					\draw [shift={(308.33,99.67)}, rotate = 88.87] [color={rgb, 255:red, 0; green, 0; blue, 0 }  ][line width=0.75]    (10.93,-3.29) .. controls (6.95,-1.4) and (3.31,-0.3) .. (0,0) .. controls (3.31,0.3) and (6.95,1.4) .. (10.93,3.29)   ;
					\draw  [dash pattern={on 0.84pt off 2.51pt}]  (309.17,287.27) -- (310.67,382.72) ;
					\draw    (455.33,143) -- (309.33,250.33) ;
					\draw    (309.33,250.33) -- (194.33,249.67) ;
					\draw [line width=1.5]    (189.33,142.33) .. controls (207.33,152.33) and (273.5,212.73) .. (309.5,213.4) .. controls (345.5,214.07) and (422.17,152.93) .. (437.33,144.33) ;
					\draw [line width=1.5]    (188.33,365.33) .. controls (208.33,347.33) and (291.17,285.27) .. (309.17,287.27) .. controls (327.17,289.27) and (428.33,360.83) .. (440.33,371.83) ;
					\draw  [color={rgb, 255:red, 118; green, 74; blue, 226 }  ,draw opacity=0.95 ][line width=2.25]  (309.83,285.83) .. controls (314.5,285.83) and (316.83,283.5) .. (316.83,278.83) -- (316.83,260.08) .. controls (316.83,253.41) and (319.16,250.08) .. (323.83,250.08) .. controls (319.16,250.08) and (316.83,246.75) .. (316.83,240.08)(316.83,243.08) -- (316.83,221.33) .. controls (316.83,216.66) and (314.5,214.33) .. (309.83,214.33) ;
					\draw [color={rgb, 255:red, 0; green, 0; blue, 0 }  ,draw opacity=1 ][line width=0.75]    (309.5,213.4) -- (309.17,287.27) ;
					\draw [line width=1.5]  [dash pattern={on 1.69pt off 2.76pt}]  (309.5,213.4) -- (324.5,142.4) ;
					\draw [line width=1.5]    (284.28,162.51) -- (309.5,213.4) ;
					\draw [line width=1.5]  [dash pattern={on 1.69pt off 2.76pt}]  (328.33,138.33) -- (284.28,162.51) ;
					\draw [line width=1.5]    (309.17,287.27) -- (284.23,359.93) ;
					\draw [line width=1.5]  [dash pattern={on 1.69pt off 2.76pt}]  (309.17,287.27) -- (334.17,340.93) ;
					\draw [line width=1.5]  [dash pattern={on 1.69pt off 2.76pt}]  (332.33,336.83) -- (284.23,359.93) ;
					\draw [line width=1.5]    (206.33,152.33) .. controls (241.07,169.13) and (390.33,162.13) .. (424.33,151.33) ;
					\draw [line width=1.5]    (235.83,176.83) .. controls (269.83,194.83) and (357.13,182.93) .. (394.33,173.33) ;
					\draw [line width=1.5]    (246.33,322.33) .. controls (275.13,326.73) and (347.2,333.23) .. (369.33,321.33) ;
					\draw [line width=1.5]  [dash pattern={on 1.69pt off 2.76pt}]  (235.83,176.83) .. controls (295.83,154.83) and (423.33,177.67) .. (388.33,172.33) ;
					\draw [line width=1.5]  [dash pattern={on 1.69pt off 2.76pt}]  (206.33,152.33) .. controls (247.33,127.79) and (432.8,140.07) .. (424.33,151.33) ;
					\draw [line width=1.5]  [dash pattern={on 1.69pt off 2.76pt}]  (251.33,320.33) .. controls (295,305.17) and (334.8,313.07) .. (369.33,321.33) ;
					\draw [line width=1.5]  [dash pattern={on 1.69pt off 2.76pt}]  (205.33,350.33) .. controls (252.83,327.17) and (389.83,339.17) .. (408.33,348.33) ;
					\draw  [color={rgb, 255:red, 74; green, 144; blue, 226 }  ,draw opacity=1 ][line width=2.25]  (209.83,158.33) .. controls (205.16,158.34) and (202.84,160.68) .. (202.85,165.35) -- (203.05,242.1) .. controls (203.07,248.77) and (200.75,252.11) .. (196.08,252.12) .. controls (200.75,252.11) and (203.09,255.43) .. (203.1,262.1)(203.1,259.1) -- (203.31,338.85) .. controls (203.32,343.52) and (205.65,345.84) .. (210.32,345.83) ;
					\draw [line width=1.5]    (210.33,348.33) .. controls (247.67,361.33) and (362.33,372.33) .. (408.33,348.33) ;
					
					\draw (149.33,342.73) node [anchor=north west][inner sep=0.75pt]  [font=\footnotesize]  {$x_{1}$};
					\draw (432,225.07) node [anchor=north west][inner sep=0.75pt]  [font=\footnotesize]  {$x_{2}$};
					\draw (285,103.73) node [anchor=north west][inner sep=0.75pt]  [font=\footnotesize]  {$x_{3}$};
					\draw (152,143.9) node [anchor=north west][inner sep=0.75pt]  [font=\footnotesize]  {$f_{2}(\boldsymbol{x}_{h})$};
					\draw (162,373.4) node [anchor=north west][inner sep=0.75pt]  [font=\footnotesize]  {$f_{1}(\boldsymbol{x}_{h})$};
					\draw (335,242.4) node [anchor=north west][inner sep=0.75pt]  [font=\footnotesize]  {$1$};
					\draw (265.5,228.23) node [anchor=north west][inner sep=0.75pt]  [font=\footnotesize]  {$\Omega $};
					\draw (178,243.73) node [anchor=north west][inner sep=0.75pt]  [font=\footnotesize]  {$d$};

				\end{tikzpicture}

			\end{center}
			\caption{The 3D layer $\Omega$}
			\label{Fig1-1}

		\end{figure}

		The analysis for steady Navier-Stokes system was pioneered   by Leray in \cite{LEJMPA33}.
		If a weak solution $\Bu$ to \eqref{eqsteadyns} satisfies
		\begin{equation}\label{Dcon1}
			\int_{\Omega}|\nabla\Bu|^{2}
			\,d\Bx<+\infty,
		\end{equation}
		it is called a D-solution of steady Navier-Stokes system. Leray proved  the existence of D-solutions in both bounded and exterior domains. However, the uniqueness  is still open. As  pointed out in \cite[X.9 Remark X.9.4]{GAGP11}, a closely related and longstanding challenging problem is whether the D-solution of the homogeneous Navier-Stokes system in $\mathbb{R}^{3}$ is trivial if it vanishes at infinity.  This Liouville problem for D-solutions of three-dimensional Navier-Stokes system in the whole space has attracted a lot of attention in the last a few decades, one may refer to \cite{CWDCDS16,KTWJFA17,SN16,WJDE19,NZ19} and references therein for the progress. There has been some remarkable progress for the two-dimensional flow and the  axisymmetric flow without swirl, owing to the special structure of the equation for vorticity. It was proved in \cite{GWASP78}  that every D-solution in $\mathbb{R}^2$ is trivial. This kind of Liouville-type theorem for axisymmetric D-solutions without swirl in $\mathbb{R}^3$ was established in \cite{KPRJMFM15}. 
		
		Besides the whole space domain, the flows in domains with a noncompact boundary also 
		have both many physical applications and mathematical challenges.
		The existence  of weak solutions of steady Stokes system and Navier-Stokes system was obtained in  \cite{PileJSM84,SANPJMFM991} in the  layer-like domain (i.e., $\Omega$ is of form \eqref{domaapertu2} and satisfies $\Omega\cap\left\{|\Bx|>R_{0}\right\}
		=\left(\mathbb{R}^{2}\times(0,1)\right)\cap\left\{|\Bx|>R_{0}\right\}$ for some $R_{0}$ sufficiently large.), supplemented with no-slip boundary conditions. Moreover, the uniqueness and asymptotic behavior to Navier-Stokes system were obtained when the data satisfy some smallness assumptions. Later on, it was proved that at far fields, the solutions of steady Navier-Stokes system in a layer-like domain behave  like that of linear Stokes system  (cf. \cite{PMSLI2002,PMSAA10}). 
		In \cite{PZAAA07},  the energy estimates for solutions to the Stokes system were established in a growing layer where $\lim\limits_{|\Bx_{h}|\rightarrow \infty}(f_{2}(\Bx_{h})-f_{1}(\Bx_{h}))=\infty$ for $f_{i}(\Bx_{h})$ in \eqref{domaapertu2}. For more results related to steady Stokes system and Navier-Stokes system in the layer-like domain, one  can refer to \cite{ASHIJMSJ03,NazSM90,SANPJMFM992NE,PMZAAS08} and references therein.
		
		Recently, some Liouville-type results were derived for D-solutions in the slab  and aperture domain. In a slab $\mathbb{R}^{2}\times (0,1)$, it was proved  in \cite{CPZZARMA20}  that the   D-solutions are trivial. Very recently,    this result was improved in \cite{aBGWX} by allowing certain growth of local Dirichlet integral.
		In the addendum of \cite{ZOP22RE},  the Liouville theorems for axisymmetric  D-solutions also hold in an aperture, which is in fact a growing layer domain,  with no-slip boundary conditions. For more references about the Liouville-type theorem for D-solutions in a slab, one may refer to \cite{CPZJFA20,HWXAHE,PJMP21} and the references therein.
		
		Liouville-type theorems   for bounded solutions also receive a lot of attention since they have many applications, such as the analysis for the possible singularity of solutions and the asymptotic behavior of solutions at far fields.  It was proved in \cite{KNSS09}  that every bounded two-dimensional and three-dimensional axisymmetric flows without swirl  in the whole space are constants. The further important progress in \cite{LRZMA22} showed that the bounded axisymmetric solution $\Bu$ must be constant vectors provided that $\Bu$ is periodic in one direction and $ru^{\theta}$ is bounded. Most recently, this Liouville-type theorem was established for bounded steady axisymmetric solutions which are periodic in one direction, and bounded steady helically symmetric solutions (cf. \cite{aBGWX,HWXAHE}). In a slab $\mathbb{R}^2 \times (0, 1)$, the bounded axisymmetric solutions with no-slip boundary conditions or Navier boundary conditions were also proved to be trivial (cf. \cite{aBGWX, HWXAHE}). 
		For more Liouville-type theorems on bounded solutions  of Navier-Stokes system in various domains, please refer to \cite{KTWarXi23, JMFM13bfz,LZZSSM17,PLNonwor20}. 
		
		In this paper, we focus on  Liouville-type theorems for axisymmetric solutions to steady  Navier-Stokes system in an axisymmetric  layer domain. The both cases for flows supplemented with  no-slip boundary conditions and Navier boundary conditions are studied.
		
		Let the standard cylindrical coordinates $(r,\theta,z)$ be defined as follows
		\[
		\Bx=(x_{1}, x_{2}, x_{3})
		=(r\cos\theta, r\sin\theta,z).
		\]
		The velocity $\Bu$ is called axisymmetric if
		\[
		\Bu=u^{r}(r,z)\Be_{r}+u^{\theta}(r,z)
		\Be_{\theta}+u^{z}(r,z)\Be_{z},
		\]
		where $u^{r}, u^{\theta}, u^{z}$ are called radial, swirl and axial velocity, respectively, with
		\[
		\Be_{r}=(\cos\theta,\sin\theta,0), \ \ \ \Be_{\theta}=(-\sin\theta,\cos\theta,0)
		\ \ \ \text{and} \ \ \
		\Be_{z}=(0,0,1).
		\]
		For the axisymmetric flow, the  Navier-Stokes system \eqref{eqsteadyns} becomes
		\begin{equation}\label{axieqsteadyns}
			\begin{cases}
				\left(u^{r}\partial_{r}+u^{z}\partial_{z}\right)u^{r}-\dfrac{(u^{\theta})^{2}}{r}+\partial_{r}P
				=\left(\partial^{2}_{r}+\dfrac{1}{r}\partial_{r}+\partial^{2}_{z}-\dfrac{1}{r^{2}}\right)u^{r},\\[8pt]
				\left(u^{r}\partial_{r}+u^{z}\partial_{z}\right)u^{\theta}+\dfrac{u^{\theta}u^{r}}{r}
				=\left(\partial^{2}_{r}+\dfrac{1}{r}\partial_{r}+\partial^{2}_{z}-\dfrac{1}{r^{2}}\right)u^{\theta},\\[8pt]
				\left(u^{r}\partial_{r}+u^{z}\partial_{z}\right)u^{z}+\partial_{z}P=\left(\partial^{2}_{r}+\dfrac{1}{r}\partial_{r}+\partial^{2}_{z}\right)u^{z},\\[8pt]	
				\partial_{r}u^{r}+\partial_{z}u^{z}+\dfrac{u^{r}}{r}=0.
			\end{cases}
		\end{equation}
		For the axisymmetric layer $\Omega$, for $f_{i}$ $(i=1,2)$ defined in \eqref{domaapertu2}, one has $f_{i}(\Bx_{h})=f_{i}(r)$ with $r=\sqrt{x_{1}^{2}+x_{2}^{2}}$.   Let
		\begin{equation}\label{distfuncf1}
			f(r)=f_{2}(r)-f_{1}(r).
		\end{equation}

		First, we consider the Navier-Stokes system  \eqref{eqsteadyns} in $\Omega$ supplemented  with  no-slip boundary conditions,
		\begin{equation}\label{curboundarynoslip}
			\Bu=0, \quad \text{at} \   x_{3}=f_{i}(r),\  i=1,2.
		\end{equation}
		Our first main result is the Liouville-type theorem for the axisymmetric flow in the axisymmetric layer domain $\Omega$ with no-slip boundary conditions.
		\begin{theorem}\label{th:01}
			Let $\Omega$ be the axisymmetric layer domain  satisfying
			\begin{equation}\label{eqdomainrate}
				\max_{i=1,2}\|f^{\prime}_{i}(r)\|_{C
					(\mathbb{R}_{+})}=\gamma<+\infty
			\end{equation}
			and
			\begin{equation}\label{growboundabe1}
				|f(r)|\leq Cr^{\beta} \quad \text{for some constant \ $\beta\geq0$},
			\end{equation}
			where $f(r)$ is defined in \eqref{distfuncf1}.
			
			Let $\Bu$ be a smooth axisymmetric solution to the  Navier-Stokes system \eqref{eqsteadyns}   supplemented with no-slip boundary conditions \eqref{curboundarynoslip} in  $\Omega$.
			Then $\Bu\equiv 0$ if one of the following conditions holds.
			
			(a) For $\beta\in \left[0,\, \dfrac{1}{2}\right)$ and
			\begin{equation}\label{eqgrowinguveu1}
				\lim\limits_{R\rightarrow \infty}R^{2\beta-1}\sup\limits_{z\in[f_{1}(R),\, f_{2}(R)]}|\Bu(R,z)|=0.
			\end{equation}
			
			(b) For $\beta\in \left[0,\, \dfrac{4}{5}\right)$ and
			\begin{equation}\label{DassumDnoslip1}
				\lim\limits_{R\rightarrow \infty}R^{5\beta-4}	\int_{\Omega_{R}}|\nabla\Bu|^{2}\,d\Bx=0,
			\end{equation}
			where
			\begin{equation}\label{domainRcontrus}
				\Omega_{R} = \left\{
				\Bx \in\mathbb{R}^{3}:  (x_{1},   x_{2})\in B_{R},\,
				f_{1}(r)<x_{3}<f_{2}(r)\right\},
			\end{equation}
			and $r=\sqrt{x_{1}^{2}+x_{2}^{2}}$,
			$B_{R}=\left\{(x_{1}, x_{2}):x_1^2+x_2^2 <R^{2} \right\}$.
		\end{theorem}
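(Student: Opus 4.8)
The plan is to control the local Dirichlet integral $D(R)=\int_{\Omega_R}|\nabla\Bu|^2\,d\Bx$ by a Saint-Venant type differential inequality and then to integrate it, showing that a nontrivial solution is forced into a definite growth of $D(R)$ that is incompatible with \eqref{eqgrowinguveu1} or \eqref{DassumDnoslip1}. First I would test the momentum equation in \eqref{eqsteadyns} against $\Bu$ and integrate over $\Omega_R$. The no-slip condition \eqref{curboundarynoslip} annihilates every contribution on the tilted top and bottom $\{x_3=f_i(r)\}$, while $\nabla\cdot\Bu=0$ removes the interior convective and pressure terms, leaving the flux identity
\[
D(R)=\int_{S_R}\Big[(\partial_r\Bu)\cdot\Bu-\tfrac12|\Bu|^2u^r-Pu^r\Big]\,dS,
\]
where $S_R=\{r=R,\ f_1(R)<x_3<f_2(R)\}$ is the lateral cylinder (whose normal is $\Be_r$) and, by the coarea formula, $D'(R)=\int_{S_R}|\nabla\Bu|^2\,dS$. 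A preliminary but essential observation is the zero-flux identity: integrating $\nabla\cdot\Bu=0$ over $\Omega_r$ and using \eqref{curboundarynoslip} gives $\int_{f_1(r)}^{f_2(r)}u^r(r,z)\,dz=0$ for every $r$, which lets me replace $P$ by $P-\bar P(R)$ in the last integral, $\bar P(R)$ being the vertical average of the pressure over $S_R$.

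Next I would estimate the three surface integrals, systematically exploiting the thinness of the layer in $z$. The Poincaré inequality on $[f_1(R),f_2(R)]$ has constant comparable to $f(R)\le CR^{\beta}$ by \eqref{growboundabe1}, so $\|\Bu\|_{L^2(S_R)}\lesssim R^{\beta}\|\partial_z\Bu\|_{L^2(S_R)}$ and the viscous term is $\lesssim R^{\beta}D'(R)$. The decisive estimate is for the cubic term: combining the one-dimensional Sobolev bound $\|\Bu\|_{L^\infty_z}\lesssim f^{1/2}\|\partial_z\Bu\|_{L^2_z}$ with the vertical Poincaré inequality yields $\int_{S_R}|\Bu|^3\lesssim R^{(5\beta-1)/2}\big(D'(R)\big)^{3/2}$, and this scaling is precisely what produces the exponent $4-5\beta$ in part (b). Under the hypothesis of part (a) I would instead insert the pointwise bound $M(R)=\sup_z|\Bu(R,z)|$, so that $\int_{S_R}|\Bu|^3\lesssim M(R)R^{2\beta}D'(R)$ becomes linear in $D'(R)$ rather than super-linear.

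The principal obstacle is the pressure integral $\int_{S_R}(P-\bar P)u^r$, which is nonlocal and lives on a geometrically degenerate set whose aspect ratio $R/f(R)\sim R^{1-\beta}$ diverges. I would estimate $\|P-\bar P\|_{L^2(S_R)}$ either through the axial momentum equation, writing $\partial_zP=\Delta u^z-(\Bu\cdot\nabla)u^z$ and coupling the vertical Poincaré inequality to an interior Caccioppoli bound for $\nabla^2\Bu$, or through a Bogovskii/Nečas estimate; in either route the crux is to track how the geometric constants depend on $R$ and $f(R)$ and to show that the pressure contribution is dominated by a fixed multiple of the viscous and cubic terms already controlled. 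In part (a) this pressure contribution scales like $R^{2\beta}D'(R)$, which is exactly what pins the admissible range at $\beta<\tfrac12$. Establishing these pressure bounds with sharp dependence on the growing width is where I expect the real difficulty to lie.

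Collecting the estimates produces the Saint-Venant inequality. In part (b) it takes the form $D(R)\le CR^{2\beta}D'(R)+CR^{(5\beta-1)/2}\big(D'(R)\big)^{3/2}$; in the regime where the cubic term dominates this reads $D'(R)\gtrsim R^{-(5\beta-1)/3}D(R)^{2/3}$, and integrating $\tfrac{d}{dR}\big(3D^{1/3}\big)\gtrsim R^{-(5\beta-1)/3}$ shows that a nontrivial solution (for which $D(R_0)>0$) must satisfy $D(R)\gtrsim R^{4-5\beta}$ whenever $\beta<\tfrac45$, contradicting \eqref{DassumDnoslip1}. In part (a) the pointwise bound turns the inequality into the linear form $D(R)\le\Phi(R)D'(R)$ with $\Phi(R)=CR^{2\beta}\big(1+M(R)\big)=o(R)$ by \eqref{eqgrowinguveu1} once $\beta<\tfrac12$; since $\int^{\infty}dr/\Phi(r)$ then diverges faster than $\log R$, integration forces $D(R)$ to grow faster than every power of $R$, which is incompatible with the polynomial-in-$R$ upper bound on $D(R)$ provided by the pointwise bound together with a local energy estimate. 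In either case the only possibility is $D\equiv0$, hence $\nabla\Bu\equiv0$, and the no-slip condition \eqref{curboundarynoslip} forces $\Bu\equiv0$.
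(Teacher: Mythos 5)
Your overall strategy --- a Saint-Venant differential inequality for the local Dirichlet integral, thin-domain Poincar\'e/Sobolev bounds with constants tracked through $f(R)\le CR^{\beta}$, and integration of the resulting inequality --- is the same as the paper's, and your claimed scalings for the viscous and cubic terms are correct (the exponent $(5\beta-1)/2$ in the cubic bound does hold, but note it relies on axisymmetry to pull $\|\partial_z\Bu\|_{L^2_z}$ out of the $\theta$-integral; H\"older in $\theta$ alone goes the wrong way). The genuine gap is exactly where you say you expect the difficulty to lie: the pressure term is never estimated, and neither of the two routes you name works as stated in your framework. Because you test with the characteristic function of $\Omega_R$, the pressure appears as a surface integral $\int_{S_R}(P-\bar P)u^r\,dS$, and any bound on $\|P-\bar P\|_{L^2(S_R)}$ through $\partial_z P=\Delta u^z-(\Bu\cdot\nabla)u^z$ requires second derivatives of $\Bu$ on a codimension-one set; these are not controlled by $D'(R)$, and a Caccioppoli estimate would produce neighborhood integrals with new $R$- and $f(R)$-dependent constants, destroying the clean differential inequality $D(R)\le F\bigl(R,D'(R)\bigr)$. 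The Bogovskii route --- the one the paper takes --- needs a volume, not a surface: the paper uses the Lipschitz annular cutoff $\varphi_R$ so that the pressure enters as the volume integral $\int_\Omega P\Bu\cdot\nabla\varphi_R\,d\Bx$ over the annulus (cf. \eqref{mueqine}--\eqref{eqA83}), then uses the zero-flux identity \eqref{eqA85} to solve $\partial_r\Psi_R^r+\partial_z\Psi_R^z=ru^r$ with $\Ps_R\in H^1_0(D_R)$ via Lemma \ref{Bogovskii}, integrates by parts to move the derivatives onto $P$, and substitutes $\nabla P$ from the momentum equations \eqref{eqA89}. This eliminates $P$ entirely, so no pressure estimate is ever needed. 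Your sharp-cutoff formulation forecloses this device; to complete the proof you would either have to switch to an annular cutoff (i.e., the paper's setup) or develop trace-level pressure bounds that are substantially harder than anything else in the argument.

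There is a second gap in your part (a) endgame. You pass to the linear inequality $D\le\Phi D'$ with $\Phi=o(R)$, conclude super-polynomial growth of $D$, and then want to contradict ``the polynomial-in-$R$ upper bound on $D(R)$ provided by the pointwise bound together with a local energy estimate.'' No such upper bound is available for free: hypothesis \eqref{eqgrowinguveu1} bounds $|\Bu|$, not $|\nabla\Bu|$, and a local (boundary) Caccioppoli estimate for Navier--Stokes again requires handling the local pressure --- the very machinery that is missing. The paper avoids this circularity: under hypothesis (a) it keeps the inequality in square-root form $Y(R)\le C\epsilon R^{(5-5\beta)/2}\left[Y'(R)\right]^{1/2}$, integrates $Y'/Y^2$ over $[R,\infty)$ (using that $Y\to\infty$ for nontrivial solutions, which follows from the part (b) dichotomy), and obtains $R^{5\beta-4}Y(R)\le(4-5\beta)(C\epsilon)^2$; letting $\epsilon\to0$ contradicts the lower bound $\varliminf_{R\to+\infty}R^{5\beta-4}Y(R)>0$ furnished by Lemma \ref{le:differineq}. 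If you retain the $\left[D'(R)\right]^{1/2}$ on the right-hand side instead of passing to the linear form, your part (a) closes the same way with no extra regularity theory.
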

		There are a few remarks in order.
		\begin{remark}\label{re1bound}
			When $\beta=0$,   i.e.,
			\[
			f(r)\leq \bar{d}<+\infty \quad
			\text{for  any}\ r\in \mathbb{R},
			\]
			hence the outlet of the layer $\Omega$ is  bounded.
			For this particular case, if either
			\[
			\lim\limits_{R\rightarrow \infty}R^{-1}\sup\limits_{z\in[f_{1}(R),\, f_{2}(R)]}|\Bu(R,z)|=0,
			\]
			or
			\[
			\lim\limits_{R\rightarrow \infty}R^{-4} \int_{\Omega_{R}}|\nabla\Bu|^{2}\,d\Bx=0. 
			\] 
			then  $\Bu$ is trivial. This  can be regarded as a generalization of the Liouville-type theorem for the axisymmetric solution in a slab \cite{aBGWX}.
		\end{remark}
		\begin{remark}
			A particular case of Part (a) of Theorem \ref{th:01} asserts that if	$0\leq \beta<\dfrac{1}{2}$ and $\Bu$ is bounded, then $\Bu$ must be trivial.
		\end{remark}
		
		\begin{remark}\label{Re:13D}
			Assume that $0\leq \beta<\dfrac{4}{5}$. Let $\Bu$ be a  smooth axisymmetric solution to the Navier-Stokes system \eqref{eqsteadyns}  with no-slip boundary conditions \eqref{curboundarynoslip}. If $\Bu$ satisfies
			\[
			\int_{\Omega}|\nabla\Bu|^{2}\,d\Bx
			<+\infty,
			\]
			then $\Bu\equiv 0$. It improves the result obtained in \cite[Theorem 4.16]{ZOP22RE}, where  $\beta$ is required to $0\leq \beta<\dfrac{1}{2}$.
		\end{remark}

		Besides the no-slip boundary conditions,  the following so called Navier boundary conditions was proposed by Navier in \cite{NMARSI27}  to describe the behavior of  viscous flows on the boundary 
		\begin{equation}\label{bounNavboual}
			\Bu\cdot\Bn=0,\ \ \ (\Bn\cdot\D(\Bu)+\alpha\Bu)\cdot \Bt=0, \quad \text{at}\ x_{3}=f_{i}(x_{1}, x_{2}),\ i=1, 2,
		\end{equation}
		where $\D(\Bu)$  is the strain tensor defined by
		$$ \D(\Bu)_{k,j}=
		(\partial_{x_j}u^{k}+\partial_{x_k}u^{j})/2, \quad k, j=1,2,3,
		$$
		and $\alpha \geq  0$ is the friction coefficient which measures the tendency of a fluid over the boundary, $\Bt$ and $\Bn$ are the unit tangent and outer normal vectors on the boundary. 
		If $\alpha=0$,
		the Navier boundary conditions  are also called the full (total) slip boundary conditions. If $\alpha \to \infty$, the Navier boundary conditions formally  reduce to the classical no-slip boundary conditions. 
		
		The non-stationary Navier-Stokes system supplemented with the Navier boundary conditions has been widely investigated in various aspects (cf. \cite{HBV05CpAM,HBV06Cpaa,CQIndin10,DLX18Jmfm,JPKSIAM06,MRLARMA12,XXCPAM07}). For stationary Navier-Stokes system, it was proved in \cite{TACGJDE21,ARJDE14} that the existence and uniqueness of weak and strong solutions  in a bounded domain with Navier boundary conditions in Sobolev spaces. In a pipe with full slip boundary conditions (i.e., $\alpha=0$),  the Liouville-type theorems for the stationary Navier-Stokes system were obtained in \cite{ALPY}. For more references about steady flows  with Navier boundary conditions, one may refer to \cite{ARARMA11,BDCDS10,HBV04ADE,SWX,WXJDE23} and references therein.
		
		Our second result is about the axisymmetric flow in  the axisymmetric layer domain $\Omega$ supplemented with Navier boundary conditions.
		\begin{theorem}\label{th:02}
			Let $\Omega$ be the axisymmetric layer domain  satisfying \eqref{eqdomainrate} and \eqref{growboundabe1}.   Let $\Bu$ be a smooth axisymmetric solution to the  Navier-Stokes system \eqref{eqsteadyns} in   $\Omega$ supplemented with Navier boundary conditions \eqref{bounNavboual}  with $\alpha>0$. If $\|\nabla_{\Bt} \Bn\|_{L^{\infty}(\partial\Omega)}$ is  bounded, 
			then $\Bu\equiv 0$ as long as one of the following two conditions holds.
			
			(a) The condition \eqref{growboundabe1} holds for $\beta\in \left[0,\, \dfrac{1}{2}\right)$ and
			\begin{equation}\label{eqgrowinguveu2}
				\lim\limits_{R\rightarrow \infty}R^{2\beta-1}\sup\limits_{z\in[f_{1}(R),\, f_{2}(R)]}|\Bu(R,z)|=0.
			\end{equation}
			
			(b) The condition \eqref{growboundabe1} holds for $\beta\in \left[0,\, \dfrac{4}{5}\right)$ and
			\begin{equation}\label{DassumDnoslip2new}
				\lim\limits_{R\rightarrow \infty}R^{5\beta-4}\left[\int_{\Omega_{R}}|\nabla\Bu|^{2}\,d\Bx+2\alpha\int_{\partial\Omega_{R}\cap\partial\Omega}|\nabla\Bu|^{2}\,dS\right]=0,
			\end{equation}
			where
			$\Omega_{R}$ is defined in \eqref{domainRcontrus}.
		\end{theorem}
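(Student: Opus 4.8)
The plan is to reproduce the Saint-Venant scheme behind Theorem \ref{th:01}, modifying every step so that the Navier boundary conditions \eqref{bounNavboual} are respected. Exploiting axisymmetry, I would first pass to the meridian section and introduce the local energy
\[
\mathcal{E}(R)=\int_{\Omega_{R}}|\na\Bu|^{2}\,d\Bx+2\alpha\int_{\partial\Omega_{R}\cap\partial\Omega}|\Bu|^{2}\,dS ,
\]
whose growth is precisely what the hypotheses \eqref{eqgrowinguveu2} and \eqref{DassumDnoslip2new} constrain. The objective is a differential inequality for $\mathcal{E}(R)$ forcing $\mathcal{E}\equiv 0$; once $\na\Bu\equiv 0$ on $\Omega$, the boundary condition $\Bu\cdot\Bn=0$ together with $\mathcal{E}=0$ pins $\Bu$ to the trivial solution.

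The first step is the weighted energy identity. Writing $-\Delta\Bu=-2\,\div\D(\Bu)$ (legitimate since $\div\Bu=0$) and testing the momentum equation in \eqref{eqsteadyns} against $\Bu\chi_{R}$, where $\chi_{R}=\chi_{R}(r)$ is a radial cutoff equal to $1$ on $B_{R}$ and vanishing outside $B_{2R}$, integration by parts of the viscous term produces $2\int|\D(\Bu)|^{2}\chi_{R}$ together with a boundary integral. Since $\Bu\cdot\Bn=0$ on $\partial\Omega$, the field $\Bu$ is purely tangential there, and the Navier condition $(\Bn\cdot\D(\Bu)+\alpha\Bu)\cdot\Bt=0$ converts that boundary integral into $2\alpha\int_{\partial\Omega}|\Bu|^{2}\chi_{R}\,dS$, the friction term of $\mathcal{E}$. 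To replace $2\int|\D(\Bu)|^{2}\chi_{R}$ by the Dirichlet integral $\int|\na\Bu|^{2}\chi_{R}$ I would invoke Korn's identity, whose boundary remainder is dominated by $\|\na_{\Bt}\Bn\|_{L^{\infty}(\partial\Omega)}\int_{\partial\Omega}|\Bu|^{2}\chi_{R}\,dS$; this is exactly why that curvature bound is imposed. Finally, the convection and pressure terms, after using incompressibility and $\Bu\cdot\Bn=0$, collapse into the flux of the total head pressure, so that
\[
\mathcal{E}(R)\ \lesssim\ \int_{R\le r\le 2R}\Big(|\na\Bu|\,|\Bu|+\big(\tfrac12|\Bu|^{2}+|P|\big)|\Bu|\Big)|\na\chi_{R}|\,d\Bx ,
\]
whose right-hand side is supported on the shell $\{R\le r\le 2R\}$ where $|\na\chi_{R}|\lesssim R^{-1}$.

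The heart of the argument is to bound this shell integral by the energy increment $\mathcal{E}(2R)-\mathcal{E}(R)$ and a geometric factor carrying the growth \eqref{growboundabe1}. The genuinely new difficulty relative to the no-slip case is the absence of a full Poincar\'e inequality: under Navier conditions only the normal trace of $\Bu$ vanishes, so the tangential velocity cannot be dominated by $\na\Bu$ alone. The plan is to compensate by carrying the friction term of $\mathcal{E}$ throughout and using a Poincar\'e--Korn estimate on each vertical fiber of the layer, with the constant controlled through $\|\na_{\Bt}\Bn\|_{L^{\infty}}$ and the width bound $f(r)\le Cr^{\beta}$. A companion pressure estimate is required: since $P$ is defined only up to a constant, I would subtract its shell average and bound the oscillation by $\|\na\Bu\|_{L^{2}}$ on the shell via a Bogovskii-type test field adapted to the annulus, again tracking the $r^{\beta}$ growth of the width. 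In case (a) the cubic and pressure fluxes are then estimated using the pointwise control $\sup_{z}|\Bu(R,z)|$, producing the factor $R^{2\beta-1}\sup_{z}|\Bu(R,z)|$; in case (b) they are closed purely in terms of $\mathcal{E}$, yielding a Saint-Venant differential inequality of the form $\mathcal{E}(R)\le C R^{a}\big(\mathcal{E}'(R)\big)^{b}$ with exponents determined by $\beta$.

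The last step is the elementary ODE argument. If $\Bu\not\equiv 0$ then $\mathcal{E}(R_{0})>0$ for some $R_{0}$, and integrating the Saint-Venant inequality forces $\mathcal{E}(R)$ to grow at least like $R^{1-2\beta}$ in case (a) and like $R^{4-5\beta}$ in case (b). These lower bounds contradict \eqref{eqgrowinguveu2} (for $\beta<\tfrac12$) and \eqref{DassumDnoslip2new} (for $\beta<\tfrac45$) respectively, so $\mathcal{E}\equiv 0$ and $\Bu\equiv 0$. I expect the main obstacle to be the pair of estimates forced upon us by the Navier conditions --- the Poincar\'e--Korn control of the tangential velocity and the accompanying pressure-oscillation bound --- since every remaining manipulation is essentially that already carried out for Theorem \ref{th:01}.
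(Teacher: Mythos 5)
Your setup and your case (b) follow the paper's own proof in all essentials: testing with a radial cutoff, writing $-\Delta\Bu=-2\,{\rm div}\,\D(\Bu)$, converting the viscous boundary term via the Navier condition into the friction term, controlling the Korn-type curvature remainder $\int_{\partial\Omega}(\Bu\cdot\na_{\Bt}\Bn)\cdot\Bu\,\chi_{R}\,dS$ by $\|\na_{\Bt}\Bn\|_{L^{\infty}(\partial\Omega)}$ and absorbing it into $2\alpha\int_{\partial\Omega}|\Bu|^{2}\chi_{R}\,dS$ (this is exactly where $\alpha>0$ is needed), the trace-corrected Poincar\'e and Sobolev inequalities of Lemma \ref{1Nabineqnoslipes}, the zero-flux identity $\int_{f_{1}(r)}^{f_{2}(r)}ru^{r}\,dz=0$ coming from incompressibility plus $\Bu\cdot\Bn=0$, and a Bogovskii field on the shell. (Your boundary term $|\Bu|^{2}$ in $\mathcal{E}$ is the right one; the $|\nabla\Bu|^{2}$ in \eqref{DassumDnoslip2new} is evidently a typo, since the paper's proof works with $|\Bu|^{2}$.) Two deviations are harmless but need care. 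First, with dyadic shells $[R,2R]$ and a sharply truncated energy you cannot literally write $\mathcal{E}'(R)$; the paper builds the cutoff weight into its energy $\hat Z(R)$ and chooses shells of width $\gamma^{\ast}f(R)$ with cutoff slope $1/f(R)$, so that by \eqref{eqdomainrate} the width $f$ is comparable across the shell and one lands on the clean differential inequality $\hat Z(R)\le C\{R^{\beta}\hat Z'(R)+R^{(5\beta-1)/2}[\hat Z'(R)]^{3/2}\}$ to which Lemma \ref{le:differineq} applies verbatim; with your choice you would need an iteration lemma in place of Lemma \ref{le:differineq}. Second, subtracting the shell mean of $P$ and bounding its oscillation by Bogovskii is a legitimate alternative to the paper's route of inserting the Bogovskii field into $\int P\Bu\cdot\na\chi_{R}\,d\Bx$ and substituting the momentum equation for $\na P$; both hinge on the same zero-flux fact.

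The genuine gap is the conclusion of case (a). You claim that integrating the Saint-Venant inequality forces $\mathcal{E}(R)\gtrsim R^{1-2\beta}$ and that this ``contradicts \eqref{eqgrowinguveu2}''. Neither half works. The hypothesis \eqref{eqgrowinguveu2} is a pointwise bound on the velocity, not on the energy, so no growth rate of $\mathcal{E}$ can contradict it directly. More importantly, the inequality one actually obtains in case (a) is of the form $\hat Z(R)\le C\epsilon R^{(5-5\beta)/2}[\hat Z'(R)]^{1/2}$ for $R\ge R_{0}(\epsilon)$, and its exponent $(5-5\beta)/2$ exceeds $1/2$: integrating $\hat Z'/\hat Z^{2}\ge (C\epsilon)^{-2}R^{5\beta-5}$ over $[R,\infty)$ yields only the upper bound $R^{5\beta-4}\hat Z(R)\le (4-5\beta)(C\epsilon)^{2}$, and there exist bounded, positive, nondecreasing functions satisfying such an inequality (solve it with equality), so by itself it forces neither growth nor vanishing. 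The missing ingredient is precisely the Saint-Venant lower bound established in case (b): the case (b) differential inequality holds for \emph{every} smooth solution, so if $\Bu\not\equiv 0$ then Lemma \ref{le:differineq} gives $\varliminf_{R\to\infty}R^{5\beta-4}\hat Z(R)>0$; it is this lower bound, played against the upper bound $(4-5\beta)(C\epsilon)^{2}$ with $\epsilon$ arbitrarily small, that produces the contradiction. In short, case (a) is not self-contained: it must invoke case (b), and the relevant growth rate for nontrivial solutions is $R^{4-5\beta}$, not $R^{1-2\beta}$.
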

		There are several remarks on Theorem \ref{th:02}.
		\begin{remark}
			If  \[\max\limits_{i=1,2}\sup\limits_{r\in\mathbb{R}_{+}}\dfrac{|f^{\prime\prime}_{i}(r)|}{[1+(f_{i}^{\prime}(r))^{2}]^{\frac{3}{2}}}<+\infty,\]  
			then $\|\nabla_{\Bt} \Bn\|_{L^{\infty}(\partial\Omega)}$ is bounded, so that the 
			condition in Theorem \ref{th:02} holds.
		\end{remark}
		\begin{remark}\label{re4bound}
			In particular, if $\beta=0$ and
			\[
			\lim\limits_{R\rightarrow \infty}R^{-1}\sup\limits_{z\in[f_{1}(R),\, f_{2}(R)]}|\Bu(R,z)|=0,
			\]
			then  $\Bu$ is trivial. This result generalizes the Liouville-type theorem for the axisymmetric solution in a slab \cite{HWXAHE}.
		\end{remark}
		\begin{remark}
			A particular case of Part (a) of Theorem \ref{th:02} asserts that if	$0\leq \beta<\dfrac{1}{2}$ and $\Bu$ is bounded, then $\Bu$ must be trivial.
		\end{remark}
		\begin{remark}
			Assume that $0\leq \beta<\dfrac{4}{5}$. Let $\Bu$ be a  smooth axisymmetric solution to the Navier-Stokes system \eqref{eqsteadyns} in $\Omega$ supplemented  with Navier boundary conditions \eqref{bounNavboual}. If
			\[
			\int_{\Omega}|\nabla\Bu|^{2}\,d\Bx+2\alpha\int_{\partial\Omega}|\Bu|^{2}\,dS<+\infty,
			\]
			then $\Bu\equiv 0$.
		\end{remark}

		Now we give the key ideas of the proof.  Inspired by the works \cite{aBGWX,aBYA,HWXAHE}, the key idea is,  together with the Bogovskii map, to establish the Saint-Venant type estimate  for the Dirichlet integral of $\Bu$ over a bounded subdomain. The Saint-Venant's principle was first developed in \cite{JKKTARMA66,RATARMA65}  to deal with the solutions for elastic equations.   This idea  was generalized in  \cite{OYASN77} to estimate the growth of Dirichlet integral for  second order elliptic equations, where the uniqueness and existence results of boundary value problems in unbounded domains were obtained. Later on, in order to study the famous Leray problem, i.e., the well-posedness of the boundary value problems for stationary Navier-Stokes system in an infinite long nozzle, the Saint-Venant's principle was used to estimate the Dirichlet integral for  stationary Stokes and Navier-Stokes system \cite{LSZNSL80,PileJSM84}. Furthermore, for the Navier boundary conditions case, the impermeable boundary conditions \eqref{eximform1} together with divergence free property not only gives the Poincar\'{e} inequality
		for $u^{r}$, but also  yields the existence of a Bogovskii map. For the growing layer with unbounded outlets, we choose the truncated domain carefully to get the uniform estimate for the associated Bogovskii map.
		
		The rest of this paper is organized as follows.  In Section \ref{Sec2}, we introduce some notations and collect some elementary lemmas which are used in this paper. The Liouville-type theorems for the axisymmetric flow in a layer with bounded outlets are presented in Section \ref{Sec3}.	  In Section \ref{Sec4},  we study the axisymmetric flow in a growing layer with unbounded outlets.

		\section{Preliminaries}\label{Sec2}
		In this section, we give some preliminaries. First, we introduce the following notations. Assume that $D$ is a bounded domain, define
		\[
		L^p_0(D)=\left\{g: \ \ g\in L^p(D), \ \ \int_D g \, d\Bx =0 \right\}.
		\]
		For any $0\leq a<b\leq\infty$, define $D_{(a,b)}= \left\{(r,z): a<r<b,\ f_{1}(r)<z<f_{2}(r)\right\}$.
		For simplicity, we denote $D_{R}=D_{(R-1, R)}$ for any $R\geq 2$.
		
		Next we introduce the Bogovskii map \cite[Lemma 2.4]{SWX}, which gives a solution to the divergence equation. The proof is due to  Bogovskii \cite{B79DANS}, see also   \cite[Section 2.8]{TT18} and \cite[Section III.3]{GAGP11}.
		\begin{lemma}\label{Bogovskii}
			Let $D \subset \mathbb{R}^{n}$ be a locally Lipschitz domain. Then there exists a constant $C$ such that for any $g\in L^{2}_{0}(D)$, the problem
			\begin{equation}\label{Bogvseq}
				\left\{\begin{aligned}
					&	{\rm div}~\bBV = g& &\text{in}\,\, D,\cr
					&\quad \ \ \bBV=0 & &\text{on}\,\, \partial D \cr
				\end{aligned}
				\right.
			\end{equation}
			has a solution $\bBV\in H^{1}_{0}(D)$ satisfying
			\[
			\|\nabla \bBV\|_{L^{2}(D)}\leq C\|g\|_{L^{2}(D)}.
			\]
			Furthermore, if the domain is of the form
			\[
			D=\bigcup_{k=1}^{N}D_{k},
			\]
			where each $D_{k}$ is star-like with respect to some open ball $B_{k}$ with $\overline{B_{k}}\subset D_{k}$,
			such that  the measure of $\tilde{D}_{i}$ satisfies  $|\tilde{D}_{i}|\neq 0$ with
			$\tilde{D}_{i}=D_{i}\cap\hat{D}_{i}$ and $\hat{D}_{i}=\bigcup_{j=i+1}^{N}D_{j}$,
			then the constant $C$ admits the following estimate
			\begin{equation}\label{eqBoges1}
				C\leq C_{D}\left(\frac{R_{0}}{R}\right)^{n}\left(1+\frac{R_{0}}{R}\right).
			\end{equation}
			Here, $R_{0}$ is the diameter of the domain $D$, $R$ is the smallest radius of the balls $B_{k}$, and
			\begin{equation}\label{eqBoges2es}
				C_{D}=\max_{1\leq k\leq N}\left(1+\frac{|D_{k}|^{\frac{1}{2}}}{|\tilde{D}_{k}|^{\frac{1}{2}}}\right)\prod_{i=1}^{k-1}\left(1+\frac{|\hat{D}_{i}\backslash D_{i}|^{\frac{1}{2}}}{|\tilde{D}_{i}|^{\frac{1}{2}}}\right).
			\end{equation}

		\end{lemma}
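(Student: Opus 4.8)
The plan is to prove the statement in two stages: first a local existence result on a single piece that is star-shaped with respect to a ball, obtained from the explicit Bogovskii integral operator, and then a global assembly over the covering $D=\bigcup_{k=1}^{N}D_{k}$ in which the datum $g$ is redistributed across the overlap regions $\tilde{D}_{i}$, with all constants propagated to recover \eqref{eqBoges1}--\eqref{eqBoges2es}. For the local step, assume $D$ is star-shaped with respect to an open ball $B\subset D$ and fix $\omega\in C_{c}^{\infty}(B)$ with $\int_{B}\omega\,d\By=1$. One defines
$$\bBV(\Bx)=\int_{D} g(\By)\left[\frac{\Bx-\By}{|\Bx-\By|^{n}}\int_{|\Bx-\By|}^{\infty}\omega\Bigl(\By+\xi\tfrac{\Bx-\By}{|\Bx-\By|}\Bigr)\xi^{n-1}\,d\xi\right]d\By.$$
Differentiating and using $\int_{D}g\,d\By=0$ gives $\div\,\bBV=g$, while the star-shapedness together with $\supp\,\omega\subset B$ forces $\bBV$ to vanish near $\partial D$, so $\bBV\in H^{1}_{0}(D)$. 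The bracketed kernel splits into a Calder\'on--Zygmund singular part and a weakly singular remainder, so singular-integral theory yields $\|\nabla\bBV\|_{L^{2}(D)}\le C\|g\|_{L^{2}(D)}$, and a scaling argument tracks the dependence of $C$ on the diameter $R_{0}$ and the ball radius $R$, producing \eqref{eqBoges1}.

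For the general domain $D=\bigcup_{k=1}^{N}D_{k}$, the problem reduces to splitting $g=\sum_{k=1}^{N}g_{k}$ with $\supp\,g_{k}\subset D_{k}$ and $\int_{D_{k}}g_{k}\,d\Bx=0$, since each $g_{k}$ is then solved by the local operator on $D_{k}$ and the results are summed. I would build the pieces inductively: set $g^{(1)}=g$, fix at each stage a bump $b_{i}$ with $\supp\,b_{i}\subset\tilde{D}_{i}$ and $\int b_{i}\,d\Bx=1$, and put
$$g_{i}=g^{(i)}\chi_{D_{i}}-\Bigl(\int_{D_{i}}g^{(i)}\,d\Bx\Bigr)b_{i},\qquad g^{(i+1)}=g^{(i)}-g_{i},$$
where $\chi_{D_{i}}$ is the characteristic function of $D_{i}$. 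Since $\tilde{D}_{i}=D_{i}\cap\hat{D}_{i}$, the residual $g^{(i+1)}$ is supported in $\hat{D}_{i}=\bigcup_{j>i}D_{j}$ and still has zero mean, so the recursion closes with $g=\sum_{k}g_{k}$, each piece mean-free on $D_{k}$. Setting $\bBV=\sum_{k}\bBV_{k}$, where $\bBV_{k}\in H^{1}_{0}(D_{k})$ solves $\div\,\bBV_{k}=g_{k}$ and is extended by zero, then gives $\bBV\in H^{1}_{0}(D)$ with $\div\,\bBV=g$.

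The chief obstacle, and the genuinely delicate part of the proof, is the constant bookkeeping, since each renormalization by $b_{i}$ both enlarges the $L^{2}$ norm and feeds mass forward. Choosing $b_{i}$ of size $\sim|\tilde{D}_{i}|^{-1}$ gives $\|b_{i}\|_{L^{2}}\lesssim|\tilde{D}_{i}|^{-1/2}$. Because $g^{(i)}$ has zero mean over $\hat{D}_{i-1}=D_{i}\cup\hat{D}_{i}$, one has $\int_{D_{i}}g^{(i)}\,d\Bx=-\int_{\hat{D}_{i}\setminus D_{i}}g^{(i)}\,d\Bx$, so the transported mass is controlled by $|\hat{D}_{i}\setminus D_{i}|^{1/2}\|g^{(i)}\|_{L^{2}}$; iterating this bound on the growth of $\|g^{(i+1)}\|_{L^{2}}$ relative to $\|g^{(i)}\|_{L^{2}}$ produces the product factors $\prod_{i}\bigl(1+|\hat{D}_{i}\setminus D_{i}|^{1/2}/|\tilde{D}_{i}|^{1/2}\bigr)$. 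Estimating the final piece $g_{k}$ directly, via $|\int_{D_{k}}g^{(k)}\,d\Bx|\le|D_{k}|^{1/2}\|g^{(k)}\|_{L^{2}}$, supplies the prefactor $\max_{k}\bigl(1+|D_{k}|^{1/2}/|\tilde{D}_{k}|^{1/2}\bigr)$. Multiplying these together with the local constants from \eqref{eqBoges1} reproduces exactly \eqref{eqBoges2es}, so one must verify that the accumulated factors match the stated formula rather than merely giving some finite bound.
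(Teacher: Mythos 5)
Your proof is correct and takes essentially the same route as the paper's: the paper does not prove Lemma \ref{Bogovskii} itself but cites \cite{B79DANS}, \cite[Section 2.8]{TT18}, and \cite[Section III.3]{GAGP11}, and your two-stage argument (the explicit Bogovskii kernel plus Calder\'on--Zygmund theory on a domain star-shaped with respect to a ball, followed by the inductive redistribution of $g$ across the overlaps $\tilde{D}_{i}$, whose bookkeeping yields \eqref{eqBoges1}--\eqref{eqBoges2es}) is precisely the argument of those references. The only loose point is the claim that $\bBV$ ``vanishes near $\partial D$'': for general $g\in L^{2}_{0}(D)$ this is false; one first takes $g$ smooth with compact support in $D$, for which star-shapedness confines $\supp\,\bBV$ to a compact subset of $D$, and then obtains $\bBV\in H^{1}_{0}(D)$ in general by density.
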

		
		Some differential inequalities are used to characterize the growth of local Dirichlet integrals of the nontrivial solutions.
		\begin{lemma}\label{le:differineq}
			Let $y(t)$ be a nondecreasing nonnegative function  and $t_0>1$ be a fixed constant. Suppose that $y(t)$ is not identically zero.
			
			(a)  If $y(t)$  satisfies
			\begin{equation}\label{ineqlemma2-1}
				y(t) \leq C_1 y^{\prime}(t) + C_2 \left[ y^{\prime}(t) \right]^{\frac32} \ \ \ \text{for any}\,\, t\geq t_0,
			\end{equation}
			then
			\begin{equation} \label{qwelemma2-2}
				\varliminf_{t \rightarrow + \infty} t^{-3} y(t)>0.
			\end{equation}

			(b) Assume that $0\leq \delta < \dfrac{4}{5}$. If $y(t)$ satisfies
			\begin{equation}\label{231eqinlemmane2-3}
				y(t) \leq C_3 t^{\delta} y^{\prime}(t)+C_4 t^{\frac{5\delta-1}{2}}\left[ y^{\prime}(t)\right]^{\frac32} \ \ \ \text{for any}\,\,  t\geq  t_0,
			\end{equation}
			then
			\begin{equation}\label{1eqinlemma2-56rn}
				\varliminf_{t \rightarrow + \infty}  t^{5\delta-4}y(t)>0.
			\end{equation}
		\end{lemma}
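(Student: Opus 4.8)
The plan is to read both parts as \emph{reverse} comparison estimates: the hypotheses forbid $y'$ from being too small relative to $y$, and this forces $y$ to grow at least polynomially unless it is trivial. In each case I would first argue that $y(t)\to+\infty$, then distill from the inequality a single clean first-order inequality and integrate it. Throughout I use that $y$ is nondecreasing and, since $y\not\equiv0$, that $y(t_1)=:m>0$ for some $t_1\ge t_0$, so $y(t)\ge m$ for all $t\ge t_1$.

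For part (a), the starting point is the elementary dichotomy coming from \eqref{ineqlemma2-1}: for each $t$, either $C_1 y'(t)\ge\tfrac12 y(t)$ or $C_2[y'(t)]^{3/2}\ge\tfrac12 y(t)$, hence
\[
y'(t)\ \ge\ \min\bigl\{\,(2C_1)^{-1}y(t),\ (2C_2)^{-2/3}y(t)^{2/3}\,\bigr\}.
\]
Once $y\ge m$ the right-hand side is bounded below by a positive constant whose integral over $[t_1,\infty)$ diverges, so $y$ cannot remain bounded and $y(t)\to+\infty$. For $y$ large the sublinear quantity $(2C_2)^{-2/3}y^{2/3}$ is the smaller of the two, so there is $T$ with $y'(t)\ge(2C_2)^{-2/3}y(t)^{2/3}$ for $t\ge T$; writing $w=y^{1/3}$ this reads $w'(t)\ge\tfrac13(2C_2)^{-2/3}$, which integrates to $w(t)\ge c(t-T)$ and hence $y(t)\ge c^{3}(t-T)^{3}$, giving \eqref{qwelemma2-2}.

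Part (b) follows the same scheme but with $t$-dependent coefficients. The dichotomy from \eqref{231eqinlemmane2-3} yields
\[
y'(t)\ \ge\ \min\bigl\{\,(2C_3)^{-1}y(t)\,t^{-\delta},\ (2C_4)^{-2/3}y(t)^{2/3}\,t^{-(5\delta-1)/3}\,\bigr\}=:\min\{B_1(t),B_2(t)\}.
\]
First I would show $y(t)\to+\infty$: with $y\ge m$, the quantity $\min\{B_1,B_2\}$ is bounded below by a constant multiple of $t^{-p}$ with $p=\max\{\delta,(5\delta-1)/3\}$, and since $0\le\delta<\tfrac45$ one has $p<1$, so this lower bound is not integrable at infinity and $y$ must be unbounded. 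A short bootstrap then upgrades this to a genuine power bound $y(t)\ge c\,t^{a}$ with $a>1-2\delta$: a direct comparison shows $B_2\le B_1$ precisely when $y\gtrsim t^{1-2\delta}$, so while the exponent is below $1-2\delta$ the minimum equals the linear branch $B_1\sim t^{a-\delta}$, and integrating raises the exponent by the fixed amount $1-\delta>0$ at each step until it exceeds $1-2\delta$ after finitely many steps. Once $y(t)\ge t^{1-2\delta}$ for large $t$, the minimum equals $B_2$, so $y'(t)\ge(2C_4)^{-2/3}y(t)^{2/3}t^{-(5\delta-1)/3}$; with $w=y^{1/3}$ this gives $w'(t)\ge\tfrac13(2C_4)^{-2/3}t^{-(5\delta-1)/3}$, and because $0\le\delta<\tfrac45$ the exponent $-(5\delta-1)/3$ exceeds $-1$, so integration yields $w(t)\ge c\,t^{(4-5\delta)/3}$, i.e. $y(t)\ge c^{3}t^{4-5\delta}$ and \eqref{1eqinlemma2-56rn}.

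The dichotomy and the final integrations are routine; the main obstacle is handling the \emph{switching} between the two branches, since at a given $t$ only one alternative is guaranteed and one cannot integrate a single inequality over all of $[t_0,\infty)$. The structural fact that resolves this, and that makes the threshold $\delta<\tfrac45$ surface, is that the crossover between the linear branch $B_1$ and the sublinear branch $B_2$ sits at the level $y\sim t^{1-2\delta}$, which lies strictly below the target growth $t^{4-5\delta}$; this gap is exactly what lets the bootstrap drive $y$ into the region $y\ge t^{1-2\delta}$, where the favorable bound on $w'$ holds unconditionally. The most delicate step is verifying unboundedness of $y$ in part (b), where the constant lower bound available in part (a) is replaced by the decaying quantity $\min\{B_1,B_2\}$, and it is precisely the non-integrability of this quantity—equivalent to $\delta<\tfrac45$—that the argument turns on.
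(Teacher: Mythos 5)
Your proof is correct, but it reaches the conclusion by a genuinely different route than the paper. The paper disposes of the linear term $C_3t^{\delta}y'(t)$ with a Young's inequality trick: since $y(t)\geq y(t_0)>0$, it writes $C_3t^{\delta}y'(t)\leq \tfrac12 y(t_0)+Ct^{\frac32\delta}\left[y'(t)\right]^{\frac32}$, absorbs $\tfrac12 y(t_0)\leq\tfrac12 y(t)$, and so reduces \eqref{231eqinlemmane2-3} to the pure power inequality $y\leq C\left(t^{\frac32\delta}+t^{\frac{5\delta-1}{2}}\right)\left[y'\right]^{\frac32}$; it then splits into two cases: for $\delta\in\left[\tfrac12,\tfrac45\right)$ the exponent $\frac{5\delta-1}{2}$ dominates immediately, while for $\delta\in\left[0,\tfrac12\right)$ one integration gives $y\gtrsim t^{3-3\delta}$, hence $y'\gtrsim t^{2-3\delta}$, which upgrades the linear term and again yields $y\leq Ct^{\frac{5\delta-1}{2}}\left[y'\right]^{\frac32}$; finally it integrates this single inequality. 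You instead keep both branches through the pointwise dichotomy $y'\geq\min\{B_1,B_2\}$, prove unboundedness from non-integrability, and run an iterative exponent bootstrap ($a\mapsto a+1-\delta$) until the growth level crosses the crossover $y\sim t^{1-2\delta}$, after which the sublinear branch governs and you integrate exactly the same final inequality $y'\gtrsim y^{2/3}t^{-(5\delta-1)/3}$ as the paper. What your route buys: it is uniform in $\delta$ (no case split), it makes the crossover level $t^{1-2\delta}$ and its gap below the target $t^{4-5\delta}$ explicit, and it gives a self-contained proof of part (a), which the paper only cites from \cite{aBGWX}; what the paper's route buys is brevity, since one application of Young's inequality kills the linear branch globally and at most one bootstrap step is ever needed. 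One spot to tighten in your write-up: from $y(t)\geq ct^{a}$ alone you cannot assert that the pointwise minimum ``equals the linear branch'' (the actual $y$ might be far larger, putting you on the other branch); what is true, and is all your integration uses, is that the minimum of the two \emph{lower bounds} is the one with the smaller exponent, namely $c't^{a-\delta}$ when $a\leq 1-2\delta$, so the step is sound once phrased that way; similarly, at the crossover you should note that reaching an exponent strictly above $1-2\delta$ beats the constant in the comparison $B_2\leq B_1$ for large $t$, which your ``gap'' remark implicitly does.
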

		\begin{proof}
			The Case $(a)$ and the Case $(b)$ with $\delta=0$ were proved  in \cite[Lemma 2.2]{aBGWX}.
			Now we give the  proof for Case (b) with general $\delta\in\left[0,\, \dfrac{4}{5}\right)$.  Without loss of generality, we  assume that there is a constant $t_{0}\geq 1$, such that $y(t_{0})>0$. So one has $y^{\prime}(t)>0$, for all $t>t_{0}$. This yields  $y(t)>y(t_{0})$.
			By Young's inequality, one has
			\[
			C_{3}t^{\delta}y^{\prime}(t)\leq \frac{1}{2}y(t_{0})+Ct^{\frac{3}{2}\delta}[y^{\prime}(t)]^{\frac{3}{2}}.
			\]
			This, together with \eqref{231eqinlemmane2-3},  yields
			\begin{equation}\label{diffinle22}
				y(t) \leq Ct^{\frac{3}{2}\delta} \left[y^{\prime}(t)\right]^{\frac{3}{2}}+Ct^{\frac{5\delta-1}{2}}\left[y^{\prime}(t)\right]^{\frac{3}{2}}
				\ \ \ \text{for any}\,\,  t\geq  t_0.
			\end{equation}
			
			(i) For $\delta\in\left[0,\, \dfrac{1}{2} \right)$, it follows from \eqref{diffinle22} that one obtains
			\begin{equation}\label{eqdeifflew23}
				y(t) \leq Ct^{\frac{3}{2}\delta} \left[y^{\prime}(t)\right]^{\frac{3}{2}} \ \ \ \text{for any}\,\,  t\geq  t_0.
			\end{equation}
			Integrating from $t_{0}$ to $t$ yields
			\[
			t^{3-3\delta}\leq Cy(t).
			\]
			This, together with \eqref{eqdeifflew23}, yields 
			\begin{equation}\label{eqredwes25}
				t^{2-3\delta}\leq Cy^{\prime}(t).
			\end{equation}
			Combining \eqref{231eqinlemmane2-3} and \eqref{eqredwes25}, one arrives at
			\[
			\begin{split}
				y(t)&\leq
				Ct^{\delta} y^{\prime}(t)+C t^{\frac{5\delta-1}{2}}\left[ y^{\prime}(t)\right]^{\frac{3}{2}}\\
				&\leq
				Ct^{\delta} \left[ y^{\prime}(t)\right]^{\frac{3}{2}}t^{\frac{3}{2}\delta-1}+C t^{\frac{5\delta-1}{2}}\left[ y^{\prime}(t)\right]^{\frac{3}{2}}\\
				&\leq
				Ct^{\frac{5\delta-1}{2}}\left[ y^{\prime}(t)\right]^{\frac{3}{2}}.
			\end{split}
			\]
			
			(ii)  For $\delta\in\left[\dfrac{1}{2},\, \dfrac{4}{5} \right)$, since $y(t)$ is nondecreasing, it holds that
			\begin{equation}\label{eqdiffeq27t}
				y(t) \leq Ct^{\frac{5\delta-1}{2}} \left[y^{\prime}(t)\right]^{\frac{3}{2}} \ \ \ \text{for any}\,\,  t\geq  t_0.
			\end{equation}
			Next, we integrate \eqref{eqdiffeq27t} from $t_{0}$ to $t$, yields
			\[
			t^{5\delta-4}y(t)\geq C>0 \ \ \ \text{for any}\,\,  t\geq  2t_0.
			\]
			This completes the proof for Case (b) of Lemma \ref{le:differineq}.
		\end{proof}
		Now we  introduce a Sobolev embedding inequality for functions with no-slip boundary conditions in a two-dimensional channel. The similar Sobolev embedding inequality can be found in \cite[Appendix]{LSZNSL80}. 
		\begin{lemma}\label{L4ineqnoslipes}
			For any $\Bu\in H^{1}(D_{(a,b)})$ with $\Bu=0$ on $\partial D_{(a,b)}\cap \partial D_{(0, \infty)}$, one has
			\begin{equation}\label{Poincare1}
				\|\Bu\|_{L^{2}(D_{(a,b)})}
				\leq M_{1}(D_{(a,b)})\|\nabla   \Bu\|_{L^{2}(D_{(a,b)})}
			\end{equation}
			and
			\begin{equation}\label{Poincare2Sobole4}
				\|\Bu\|_{L^{4}(D_{(a,b)})}
				\leq M_{2}(D_{(a,b)})\|\nabla   \Bu\|_{L^{2}(D_{(a,b)})},
			\end{equation}
			where
			\begin{equation}\label{Poincons1cha}
				M_{1}(D_{(a,b)})=\sup\limits_{r\in (a, b)}f(r),
			\end{equation}
			\begin{equation}\label{eqconMOFa}
				\begin{split}
					M_{2}(D_{(a,b)})
					=C\left\{|D_{(a,b)}|^{\frac{1}{2}}
					\left[(b-a)^{-1}M_{1}(D_{(a,b)})+1
					\right]\right\}^{\frac{1}{2}}
				\end{split}
			\end{equation}
			and $C$ is a universal constant.
		\end{lemma}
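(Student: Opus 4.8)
The plan is to prove the Poincaré inequality \eqref{Poincare1} first, and then bootstrap it into the $L^4$ estimate \eqref{Poincare2Sobole4} by a Ladyzhenskaya-type argument tailored to the channel geometry. Throughout, I would run all estimates on the scalar function $|\Bu|$ (equivalently componentwise), so the vector-valued nature of $\Bu$ causes no difficulty. For \eqref{Poincare1}, the only available boundary information is that $\Bu$ vanishes on the top and bottom curves $z=f_i(r)$, so I would integrate in the $z$-direction: for fixed $r$ and any $z\in(f_1(r),f_2(r))$ write $\Bu(r,z)=\int_{f_1(r)}^{z}\partial_s\Bu\,ds$, apply Cauchy--Schwarz to get $|\Bu(r,z)|^2\leq (z-f_1(r))\int_{f_1(r)}^{f_2(r)}|\partial_z\Bu|^2\,dz$, and then integrate in $z$ and in $r$. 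Since the $z$-interval has length $f(r)\leq M_1(D_{(a,b)})=\sup_r f(r)$, this yields $\|\Bu\|_{L^2}\leq M_1\|\partial_z\Bu\|_{L^2}\leq M_1\|\nabla\Bu\|_{L^2}$, which is \eqref{Poincare1}.

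For \eqref{Poincare2Sobole4} I would first extract a pointwise $L^\infty_z$ bound from the no-slip direction: since $|\Bu(r,z)|^2=\int_{f_1(r)}^{z}2\,\Bu\cdot\partial_s\Bu\,ds$, one gets $\sup_z|\Bu(r,z)|^2\leq 2\psi(r)$ where $\psi(r)=\int_{f_1(r)}^{f_2(r)}|\Bu|\,|\partial_z\Bu|\,dz$. Writing $G(r)=\int_{f_1(r)}^{f_2(r)}|\Bu|^2\,dz$, this gives the factorization
\[
\int_{D_{(a,b)}}|\Bu|^4\,dr\,dz\ \leq\ \int_a^b\Big(\sup_z|\Bu|^2\Big)\,G(r)\,dr\ \leq\ 2\Big(\sup_{r}G(r)\Big)\int_a^b\psi(r)\,dr,
\]
and the last integral is controlled by Cauchy--Schwarz: $\int_a^b\psi\leq\|\Bu\|_{L^2}\|\partial_z\Bu\|_{L^2}$.

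The main obstacle is that $\Bu$ carries no boundary condition on the lateral sides $r=a,b$, so $\sup_r G(r)$ cannot be bounded by a derivative alone and must instead be controlled by the mean value plus the total variation: $G(r)\leq (b-a)^{-1}\int_a^b G\,ds+\int_a^b|G'|\,d\tau$. The key point that makes this work is that differentiating $G$ by the Leibniz rule produces boundary contributions $|\Bu(r,f_i(r))|^2 f_i'(r)$, and these vanish precisely because of the no-slip condition on $z=f_i(r)$; hence $|G'(r)|\leq 2\int_{f_1(r)}^{f_2(r)}|\Bu|\,|\partial_r\Bu|\,dz$. Consequently $\sup_r G\leq (b-a)^{-1}\|\Bu\|_{L^2}^2+2\|\Bu\|_{L^2}\|\partial_r\Bu\|_{L^2}$, and applying the already-proved Poincaré inequality \eqref{Poincare1} to the first term turns it into $(b-a)^{-1}M_1\|\Bu\|_{L^2}\|\nabla\Bu\|_{L^2}$, so that $\sup_r G\leq C\big[(b-a)^{-1}M_1+1\big]\|\Bu\|_{L^2}\|\nabla\Bu\|_{L^2}$.

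Combining these estimates gives $\|\Bu\|_{L^4}^4\leq C\big[(b-a)^{-1}M_1+1\big]\|\Bu\|_{L^2}^2\|\nabla\Bu\|_{L^2}^2$. To reach the stated constant I would close the argument with the interpolation inequality $\|\Bu\|_{L^2}^2\leq |D_{(a,b)}|^{1/2}\|\Bu\|_{L^4}^2$ (Hölder), substitute it into the right-hand side, and divide by $\|\Bu\|_{L^4}^2$ (the case $\Bu\equiv 0$ being trivial). This isolates $\|\Bu\|_{L^4}^2\leq C|D_{(a,b)}|^{1/2}\big[(b-a)^{-1}M_1+1\big]\|\nabla\Bu\|_{L^2}^2=M_2(D_{(a,b)})^2\|\nabla\Bu\|_{L^2}^2$, which is exactly \eqref{Poincare2Sobole4} with the constant $M_2$ in \eqref{eqconMOFa}. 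I expect the only delicate points to be the vanishing of the Leibniz boundary terms and the final self-improving division by $\|\Bu\|_{L^4}^2$; everything else is routine Cauchy--Schwarz bookkeeping.
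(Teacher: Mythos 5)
Your proof is correct, but it follows a genuinely different route from the paper's. The paper extends $\Bu$ by zero across the curves $z=f_i(r)$ (legitimate by the no-slip condition), applies a cited one-dimensional Gagliardo--Nirenberg inequality $\int_{\mathbb{R}}|\tilde{\Bu}|^4\,dz\leq C\int_{\mathbb{R}}|\partial_z\tilde{\Bu}|^2\,dz\,\bigl(\int_{\mathbb{R}}|\tilde{\Bu}|\,dz\bigr)^2$ to the extension, and then controls $\sup_{r}\int|\tilde{\Bu}|\,dz$ by its mean plus the $r$-derivative, which together with the Poincar\'e inequality \eqref{Poincare1} produces the constant $M_2$ in \eqref{eqconMOFa} directly. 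You instead run the classical Ladyzhenskaya argument by hand on $D_{(a,b)}$: a pointwise bound $\sup_z|\Bu(r,\cdot)|^2\leq 2\int|\Bu||\partial_z\Bu|\,dz$ via the fundamental theorem of calculus (no-slip used a first time), control of $\sup_r G$ with $G(r)=\int|\Bu|^2\,dz$ by mean plus total variation, where the Leibniz boundary terms $|\Bu(r,f_i(r))|^2f_i'(r)$ vanish (no-slip used a second time), then Poincar\'e, and finally a H\"older self-improvement step ($\|\Bu\|_{L^2}^2\leq|D_{(a,b)}|^{1/2}\|\Bu\|_{L^4}^2$ followed by division by $\|\Bu\|_{L^4}^2$) to land exactly on the stated $M_2$. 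The structural skeleton is the same in both proofs --- anisotropic treatment, no-slip in $z$, mean-plus-derivative in $r$ because there is no lateral boundary condition --- but your key tool is elementary and self-contained where the paper's is a quoted interpolation inequality applied to a zero extension. What the paper's route buys is that the constant appears without any division, so no a priori finiteness of $\|\Bu\|_{L^4}$ is needed; your final division step requires knowing $\|\Bu\|_{L^4(D_{(a,b)})}<\infty$ beforehand, which does hold here (Sobolev embedding on the bounded Lipschitz domain $D_{(a,b)}$, or prove the inequality for smooth functions and conclude by density), but you should say so explicitly, as it is the one point where your argument could be challenged.
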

		\begin{proof}
			Note that \eqref{Poincare1} is exactly the  Poincar\'e inequality. Now we prove the Sobolev embedding inequality \eqref{Poincare2Sobole4}. First,  we  extend $\Bu$ as follows 
			\[
			\begin{aligned}
				\tilde{\Bu}&=\left\{
				\begin{aligned}
					&\Bu\ \ && \Bx\in D_{(a,b)},\\
					&0\ \ &&\Bx\in \left((a,b)\times \mathbb{R}\right)\setminus D_{(a,b)}. 
				\end{aligned}
				\right.
			\end{aligned}
			\]
			Clearly, $\tilde{\Bu} \in H^{1}((a,b)\times \mathbb{R})$.
			By Gagliardo-Nirenberg interpolation inequality (cf. \cite[Theorem 1.1]{LZCPAA22}), one derives
			\begin{equation}\label{Gninextfu}
				\begin{split}
					\int_{D_{(a,b)}}|\Bu|^{4}\,d\Bx
					&=\int_{a}^{b}\,dr\int_{-\infty}^{+\infty}|\tilde{\Bu}|^{4}\, dz\\
					&\leq
					C\int_{a}^{b}\,dr
					\int_{-\infty}^{+\infty}|\partial_{z}\tilde{\Bu}|^{2}\, dz
					\left(\int_{-\infty}^{+\infty}|\tilde{\Bu}|\, dz\right)^{2}\\
					&\leq C\left(\sup_{r\in(a,b)}\int_{-\infty}^{+\infty}|\tilde{\Bu}|\,dz\right)^{2}\int_{D_{(a,b)}}
					|\partial_{z}\tilde{\Bu}|^{2}\, d\Bx.
				\end{split}
			\end{equation}
			
			On the other hand, for any $r\in (a,b)$, one has
			\[
			|\tilde{\Bu}(r, z)|\leq |\tilde{\Bu}(r^{0},z)|+\int_{a}^{b}
			|\partial_{r}\tilde{\Bu}|\,dr.
			\]
			Then we integrate the inequality with respect to $z$ and $r^{0}$ from $-\infty$ to $+\infty$ and $a$ to $b$, respectively, and get
			\begin{equation}\label{eqvintgnq}
				\begin{split}
					\int_{-\infty}^{+\infty}|\tilde{\Bu}|
					\,dz
					&\leq (b-a)^{-1}\int_{-\infty}^{+\infty}\int_{a}^{b}|\tilde{\Bu}|\,drdz +\int_{-\infty}^{+\infty}\int_{a}^{b}|\partial_{r}\tilde{\Bu}|\,drdz \\[3pt]
					&\leq (b-a)^{-1}\int_{D_{(a,b)}}|\Bu|\,d\Bx +\int_{D_{(a,b)}}|\nabla\Bu|\,d\Bx \\[3pt]
					&\leq
					|D_{(a,b)}|^{\frac{1}{2}}\left[(b-a)^{-1}\left(\int_{D_{(a,b)}}|\Bu|^{2}\,d\Bx\right)^{\frac{1}{2}} +\left(\int_{D_{(a,b)}}|\nabla\Bu|^{2}\,d\Bx\right)^{\frac{1}{2}}
					\right] \\[3pt]
					&\leq |D_{(a,b)}|^{\frac{1}{2}}\left[(b-a)^{-1}M_{1}(D_{(a,b)})+1
					\right]\left(\int_{D_{(a,b)}}|\nabla\Bu|^{2}\,d\Bx\right)^{\frac{1}{2}},
				\end{split}
			\end{equation}
			where the fourth inequality is due to the Poincar\'e inequality \eqref{Poincare1}.
			From  \eqref{Gninextfu} and \eqref{eqvintgnq}, one obtains
			\[
			\|\Bu\|_{L^{4}(D_{(a,b)})}\leq M_{2}(D_{(a,b)})\|\nabla \Bu\|_{L^{2}(D_{(a,b)})},
			\]
			where $M_{2}(D_{(a,b)})$ is given in \eqref{eqconMOFa}. This completes the proof of Lemma \ref{L4ineqnoslipes}.
		\end{proof}
		
		A similar Sobolev embedding inequality for functions with Navier boundary conditions is as follows, whose proof can be referred to \cite[Lemma 2.2]{SWX}.
		\begin{lemma}\label{1Nabineqnoslipes}
			Suppose that $f(r)\geq d_{a,b}>0$ for any $r\in (a,b)$, then for any $\Bu\in H^{1}(D_{(a,b)})$ satisfying
			$\Bu\cdot\Bn=0$  on $\partial D_{(a,b)}\cap \partial D_{(0, \infty)}$, one has the corresponding Poincar\'e inequality
			\begin{equation}\label{Poincare3}
				\|\Bu\|_{L^{2}(D_{(a,b)})}
				\leq M_{1}\left(\|\nabla   \Bu\|_{L^{2}(D_{(a,b)})}+\|  \Bu\|_{L^{2}(\partial D_{(a,b)}\cap \partial D_{(0, \infty)})}\right)
			\end{equation}
			and	the Sobolev embedding inequality
			\begin{equation}\label{Poincare4}
				\|\Bu\|_{L^{4}(D_{(a,b)})}\leq M_{3}(D_{(a,b)})(\|\nabla \Bu\|_{L^{2}(D_{(a,b)})}+\| \Bu\|_{L^{2}(\partial D_{(a,b)}\cap \partial D_{(0, \infty)})}),
			\end{equation}
			where
			\begin{equation}\label{123eqconMOFa}
				\begin{split}
					M_{3}(D_{(a,b)})
					=C(1+\|(f^{\prime}_{1},f^{\prime}_{2})\|_{L^{\infty}(D_{(a,b)})})
					\left(\frac{M_{1}}{b-a}+1\right)^{\frac{1}{2}}
					\left[|D_{(a,b)}|+ (b-a)d_{a,b}\right]^{\frac{1}{4}}\left(1+\frac{M_{1}}{d_{a,b}}\right)
				\end{split}
			\end{equation}
			with a universal constant $C$ and $M_{1}=M_{1}(D_{(a,b)})$ defined in \eqref{Poincons1cha}.
		\end{lemma}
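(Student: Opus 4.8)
The statement bundles two inequalities, and the plan is to prove the Poincar\'e inequality \eqref{Poincare3} first and then bootstrap it into the Sobolev embedding \eqref{Poincare4}, following the anisotropic (slice-by-slice) strategy of Lemma \ref{L4ineqnoslipes}. The essential new difficulty is that, under the impermeability condition $\Bu\cdot\Bn=0$, the field $\Bu$ no longer vanishes on the graphs $z=f_i(r)$, so the extension-by-zero device used in Lemma \ref{L4ineqnoslipes} is unavailable; instead every integration in $z$ or $r$ produces genuine boundary contributions that must be absorbed into the surface norm $\|\Bu\|_{L^2(\partial D_{(a,b)}\cap\partial D_{(0,\infty)})}$ kept on the right-hand sides.

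For the Poincar\'e inequality I would fix $r\in(a,b)$ and integrate $\partial_z\Bu$ along the fiber starting from the lower graph, writing $\Bu(r,z)=\Bu(r,f_1(r))+\int_{f_1(r)}^{z}\partial_z\Bu(r,s)\,ds$. After squaring, applying Cauchy--Schwarz to the integral term (which costs a factor $f(r)\le M_1$) and integrating in $z$ over the fiber of length $f(r)$, one obtains a pointwise-in-$r$ bound of the form $\int_{f_1(r)}^{f_2(r)}|\Bu|^2\,dz\lesssim M_1|\Bu(r,f_1(r))|^2+M_1^2\int_{f_1(r)}^{f_2(r)}|\partial_z\Bu|^2\,dz$. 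Integrating in $r$ and using $dS=\sqrt{1+(f_1')^2}\,dr\ge dr$ to dominate $\int_a^b|\Bu(r,f_1(r))|^2\,dr$ by the surface integral yields \eqref{Poincare3} with constant comparable to $M_1=\sup_{(a,b)}f$ as in \eqref{Poincons1cha}; note that no boundary condition is actually needed here, since the boundary norm is retained on the right.

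For the Sobolev embedding I would argue in two directions. In $z$, a one-dimensional interpolation on the fiber $I_r=(f_1(r),f_2(r))$ of length $f(r)\ge d_{a,b}$, obtained by averaging the identity $|\Bu(r,z)|^2=|\Bu(r,z_0)|^2+\int 2\Bu\cdot\partial_z\Bu$ over $z_0$, gives $\|\Bu(r,\cdot)\|_{L^\infty(I_r)}^2\lesssim f(r)^{-1}\|\Bu(r,\cdot)\|_{L^2(I_r)}^2+\|\Bu(r,\cdot)\|_{L^2(I_r)}\|\partial_z\Bu(r,\cdot)\|_{L^2(I_r)}$; combining with $\|\Bu(r,\cdot)\|_{L^4(I_r)}^4\le\|\Bu(r,\cdot)\|_{L^2(I_r)}^2\|\Bu(r,\cdot)\|_{L^\infty(I_r)}^2$ and integrating in $r$ reduces everything to the quantity $\sup_r\int_{I_r}|\Bu|^2\,dz$ together with $\|\Bu\|_{L^2(D_{(a,b)})}$ and $\|\nabla\Bu\|_{L^2(D_{(a,b)})}$. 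That supremum is controlled by the fundamental theorem of calculus in $r$ applied to $h(r)=\int_{f_1(r)}^{f_2(r)}|\Bu|^2\,dz$, and here lies the main obstacle: differentiating the $r$-dependent limits produces the terms $f_i'(r)|\Bu(r,f_i(r))|^2$, which is exactly where the factor $1+\|(f_1',f_2')\|_{L^\infty}$ and the surface norm enter. Finally I would substitute the Poincar\'e inequality \eqref{Poincare3} to trade $\|\Bu\|_{L^2(D_{(a,b)})}$ for the Dirichlet energy plus boundary term, and collect the geometric quantities $|D_{(a,b)}|$, $(b-a)d_{a,b}$, $M_1/(b-a)$ and $M_1/d_{a,b}$ into the constant $M_3$ displayed in \eqref{123eqconMOFa}. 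The delicate part throughout is the careful bookkeeping of the boundary contributions generated by the $r$-varying fibers, precisely the step that is absent in the no-slip case.
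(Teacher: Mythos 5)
Your proposal is correct, but there is no internal proof to compare it with: the paper never proves Lemma \ref{1Nabineqnoslipes}, it simply refers the reader to \cite[Lemma 2.2]{SWX}. What you have written is therefore a self-contained substitute for that citation, and it is sound. Your strategy is the natural adaptation of the paper's own proof of the no-slip Lemma \ref{L4ineqnoslipes}: where that proof extends $\Bu$ by zero and uses the $L^1$-based one-dimensional Gagliardo--Nirenberg inequality, you keep the traces on $z=f_i(r)$ and instead use the $L^2$-based Agmon inequality $\|v\|_{L^\infty(I)}^2\le |I|^{-1}\|v\|_{L^2(I)}^2+2\|v\|_{L^2(I)}\|v'\|_{L^2(I)}$ on each fiber together with $\|v\|_{L^4(I)}^4\le\|v\|_{L^\infty(I)}^2\|v\|_{L^2(I)}^2$, and you control $\sup_r h(r)$, $h(r)=\int_{f_1(r)}^{f_2(r)}|\Bu|^2\,dz$, by the fundamental theorem of calculus in $r$; differentiating the variable limits is precisely what produces $f_i'(r)|\Bu(r,f_i(r))|^2$, hence the factor $1+\|(f_1',f_2')\|_{L^\infty}$ and the trace norm, and your use of $dS=\sqrt{1+(f_i')^2}\,dr\ge dr$ to dominate $\int_a^b|\Bu(r,f_i(r))|^2\,dr$ by the surface integral is correct. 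Your remark that the hypothesis $\Bu\cdot\Bn=0$ is never actually used (the estimates are pure trace--Poincar\'e inequalities once the boundary norm is retained on the right) is also accurate. Two bookkeeping caveats, neither of which is a gap. First, your Poincar\'e argument gives $\|\Bu\|_{L^2}\le \sqrt{2}\,M_1\|\nabla\Bu\|_{L^2}+\sqrt{2M_1}\,\|\Bu\|_{L^2(\partial D_{(a,b)}\cap\partial D_{(0,\infty)})}$, which is \eqref{Poincare3} only up to a harmless absolute constant; since the paper only ever uses the scaling in $M_1$ (e.g.\ in \eqref{newbasic1sec42}), this is immaterial. Second, the constant your argument yields, of the form $C\bigl[M_1^2/(b-a)+M_1+\|(f_1',f_2')\|_{L^\infty}\bigr]^{1/4}M_1^{1/4}\bigl[1+M_1/d_{a,b}\bigr]^{1/4}$, is not literally \eqref{123eqconMOFa}, but it is dominated by $C\,M_3(D_{(a,b)})$ (use $|D_{(a,b)}|\ge (b-a)d_{a,b}$ and $d_{a,b}(1+M_1/d_{a,b})\ge M_1$), and in the only regime the paper needs, $b-a\sim d_{a,b}\sim M_1\sim f(R)$ with $\|f_i'\|_{L^\infty}\le\gamma$, both expressions scale like $[f(R)]^{1/2}$, which is exactly what Section \ref{sec42} uses. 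So your argument establishes the lemma as stated, up to the universal constant $C$, and has the added value of making the paper self-contained at this point.
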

		\section{Flows in a layer with bounded outlets}\label{Sec3}
		In order  to illustrate our main idea in a clear way, we would like to first give the proof for the Liouville-type theorems about the flows in a bounded layer, i.e., $\beta=0$ in \eqref{growboundabe1}. In the next section, we give the major difference for the proof on the Liouville-type theorems for the flows in a growing layer.		Let's introduce a  cut-off function $\varphi_R(r)$,
		\be \label{cut-off}
		\varphi_R(r)
		= \left\{ \ba
		&1,\ \ \ \ \ \ \ \ \ \ r < R-1, \\
		&R-r,\ \ \ \ R-1 \leq r \leq R, \\
		&0, \ \ \ \ \ \ \ \ \ \ r > R
		\ea  \right.
		\ee
		and the truncated  domain  
		\begin{equation}\label{truncatdomOR}
			\OR =\left\{
			\Bx \in\mathbb{R}^{3} :
			(x_{1}, x_{2})\in B_{R} \setminus \overline{B_{R-1}},\,
			f_{1}(r)<x_{3}<f_{2}(r)  \right\}, 
		\end{equation}
		where
		$r=\sqrt{x_{1}^{2}+x_{2}^{2}}$,
		$B_{R}=\left\{(x_{1}, x_{2}):x_1^2+x_2^2 <R^{2} \right\}$.
		
		\subsection{No-slip boundary conditions case}\label{Sec31}
		In this subsection, we assume that the Navier-Stokes system is supplemented with no-slip boundary  conditions \eqref{curboundarynoslip}.
		
		\begin{proof}[Proof  of Theorem \ref{th:01} (In the case $\beta=0$)]
			
			The proof is divided into three steps.
			
			\emph{Step 1.} {\emph{Set up.}} Assume that $\Bu$ is a smooth solution to \eqref{eqsteadyns}
			in $\Omega$ with no-slip boundary conditions. Multiplying the first equation in \eqref{eqsteadyns}  by $\varphi_{R}(r)\Bu$ and integrating by parts, one obtains
			\begin{equation}\label{mueqine}
				\int_{\Omega}|\nabla\Bu|^{2}\varphi_{R}\, d\Bx
				=-\int_{\Omega}\nabla\varphi_{R}\cdot \nabla\Bu\cdot \Bu\, d\Bx+\int_{\Omega}\frac{1}{2}|\Bu|^{2}\Bu\cdot \nabla\varphi_{R}\, d\Bx+\int_{\Omega}P\Bu\cdot \nabla \varphi_{R}\, d\Bx.
			\end{equation}
			The third term on the right hand can be written as
			\begin{equation}\label{eqA83}
				\int_{\Omega}P\Bu\cdot \nabla\varphi_{R}\, d\Bx
				=-2\pi \int_{R-1}^{R}\int_{f_{1}(r)}^{f_{2}(r)}Pu^{r}r\,dzdr.
			\end{equation}	
			The divergence equation in \eqref{axieqsteadyns} can  be written as
			\[
			\partial_{r}(ru^{r})+\partial_{z}(ru^{z})=0.
			\]
			Taking the no-slip boundary conditions \eqref{curboundarynoslip} into account, one can obtain
			\[
			\begin{split}
				\partial_{r}\int_{f_{1}(r)}^{f_{2}(r)}ru^{r}dz
				&=\int_{f_{1}(r)}^{f_{2}(r)}\partial_{r}(ru^{r})\,dz+ru^{r}(r,f_{2}(r))f_{2}^{\prime}(r)-ru^{r}(r,f_{1}(r))f_{1}^{\prime}(r)\\
				&=-\int_{f_{1}(r)}^{f_{2}(r)}\partial_{z}(ru^{z})\,dz=0.
			\end{split}
			\]
			Hence,
			\begin{equation}\label{eqA85}
				\int_{f_{1}(r)}^{f_{2}(r)}ru^{r}\,dz=0
				\quad \text{and}\quad \int_{R-1}^{R}\int_{f_{1}(r)}^{f_{2}(r)}ru^{r}\,dzdr=0.	\end{equation}
			By virtue of \eqref{eqA85} and Lemma \ref{Bogovskii},
			there exists a vector valued function $\Ps_{R}(r,z)\in H^{1}_{0}(D_{R};\, \mathbb{R}^{2})$ satisfying
			\begin{equation}\label{eqA86}
				\partial_{r}\Psi_{R}^{r}+\partial_{z}\Psi_{R}^{z}=ru^{r}\ \ \ \ \mbox{in}\ D_R
			\end{equation}
			and
			\begin{equation}\label{eqA87}
				\|\partial_{r}\Ps_{R}\|_{L^{2}(D_{R})}+	\|\partial_{z}\Ps_{R}\|_{L^{2}(D_{R})}\leq C\|ru^{r}\|_{L^{2}(D_{R})}\leq
				CR\|\nabla\Bu\|_{L^{2}(D_{R})}\leq
				CR^{\frac{1}{2}}\|\nabla\Bu\|_{L^{2}(\OR)}.
			\end{equation}
			Therefore, combining \eqref{eqA83} and \eqref{eqA86}  one derives
			\begin{equation}\label{eqA88}
				\begin{split}
					\int_{\Omega}P\Bu\cdot \nabla \varphi_{R}\,d\Bx
					=&-2\pi \int_{R-1}^{R}\int_{f_{1}(r)}^{f_{2}(r)}
					P(\partial_{r}\Psi_{R}^{r}+\partial_{z}\Psi_{R}^{z})\,dzdr\\
					=&\,2\pi
					\int_{R-1}^{R}\int_{f_{1}(r)}^{f_{2}(r)}(\partial_{r}P\Psi_{R}^{r}+\partial_{z}P\Psi_{R}^{z})\,dzdr.
				\end{split}
			\end{equation}
			From the cylindrical form \eqref{axieqsteadyns} for the axisymmetric flow, the gradient $(\partial_{r}P,\, \partial_{z}P)$ of the pressure satisfies
			\begin{equation}\label{eqA89}
				\begin{cases}
					(u^{r}\partial_{r}+u^{z}\partial_{z})u^{r}-\dfrac{(u^{\theta})^2}{r}+\partial_{r}P=\left(\partial_{r}^{2}+\dfrac{1}{r}\partial_{r}+\partial_{z}^{2}-\dfrac{1}{r^2}\right)u^{r},\\[8pt]
					(u^{r}\partial_{r}+u^{z}\partial_{z})u^{z}+\partial_{z}P=\left(\partial_{r}^{2}+\dfrac{1}{r}\partial_{r}+\partial_{z}^{2}\right)u^{z}.
				\end{cases}
			\end{equation}
			Using the system \eqref{eqA89} and integration by parts, one obtains
			\begin{equation}\label{eqparrP}
				\begin{split}
					&\int_{R-1}^{R}\int_{f_{1}(r)}^{f_{2}(r)}\partial_{r}P\Psi^{r}_{R}\,dzdr\\
					=& \int_{R-1}^{R}\int_{f_{1}(r)}^{f_{2}(r)}\left[\left(\partial_{r}^{2}+\dfrac{1}{r}\partial_{r}+\partial_{z}^{2}-\dfrac{1}{r^2}\right)u^{r}-(u^{r}\partial_{r}+u^{z}\partial_{z})u^{r}+\dfrac{(u^{\theta})^2}{r}\right]\Psi^{r}_{R}\,dzdr
					\\
					=&
					-\int_{R-1}^{R}\int_{f_{1}(r)}^{f_{2}(r)}(\partial_{r}u^{r}\partial_{r}\Psi^{r}_{R}+\partial_{z}u^{r}\partial_{z}\Psi^{r}_{R})\,dzdr+\int_{R-1}^{R}\int_{f_{1}(r)}^{f_{2}(r)}\left[\left(\dfrac{1}{r}\partial_{r}-\dfrac{1}{r^2}\right)u^{r}\right]\Psi^{r}_{R}\,dzdr\\
					&-\int_{R-1}^{R}\int_{f_{1}(r)}^{f_{2}(r)}\left[(u^{r}\partial_{r}+u^{z}\partial_{z})u^{r}-\dfrac{(u^{\theta})^2}{r}\right]\Psi^{r}_{R}\,dzdr
				\end{split}
			\end{equation}
			and
			\begin{equation}\label{eqparpazP}
				\begin{split}
					&\int_{R-1}^{R}\int_{f_{1}(r)}^{f_{2}(r)}\partial_{z}P\Psi^{z}_{R}\,dzdr\\
					=&-\int_{R-1}^{R}\int_{f_{1}(r)}^{f_{2}(r)}(\partial_{r}u^{z}\partial_{r}\Psi^{z}_{R}+\partial_{z}u^{z}\partial_{z}\Psi^{z}_{R})\,dzdr+\int_{R-1}^{R}\int_{f_{1}(r)}^{f_{2}(r)}\left(\dfrac{1}{r}\partial_{r}u^{z}\right)\Psi^{z}_{R}\,dzdr\\
					&-\int_{R-1}^{R}\int_{f_{1}(r)}^{f_{2}(r)}\left[(u^{r}\partial_{r}+u^{z}\partial_{z})u^{z}\right]\Psi^{z}_{R}\,dzdr.
				\end{split}
			\end{equation}
			
			\emph{Step 2.} {\emph{Proof  for Case (b) of Theorem \ref{th:01} ($\beta=0$)}.}
			Now we are ready to estimate the first two terms on the right hand sides of \eqref{mueqine} and  \eqref{eqparrP}--\eqref{eqparpazP}.
			By H\"older inequality and Lemma \ref{L4ineqnoslipes},
			one has
			\begin{equation}\label{eqA93}
				\begin{split}
					\left|\int_{\Omega}\nabla\varphi_{R}\cdot \nabla\Bu\cdot\Bu\, d\Bx \right|&\leq 2\pi
					\left|\int_{R-1}^{R}\int_{f_{1}(r)}^{f_{2}(r)}|\nabla\Bu|\cdot|\Bu|r\,dzdr\right|\\ 
					&\leq CR\|\nabla\Bu\|_{L^{2}(D_{R})}\|\Bu\|_{L^{2}(D_{R})}\\
					&\leq CR\|\nabla\Bu\|_{L^{2}(D_{R})}^{2}\leq C\|\nabla\Bu\|^{2}_{L^{2}(\OR)}
				\end{split}
			\end{equation}
			and	
			\begin{equation}\label{eqA92}
				\begin{split}
					\left|\int_{\Omega}\frac{1}{2}\left|\Bu \right|^2\Bu\cdot \nabla \varphi_{R}\,d\Bx\right|
					&=\pi
					\left|\int_{R-1}^{R}\int_{f_{1}(r)}^{f_{2}(r)}|\Bu|^{2}u^{r}r\,dzdr\right|
					\\
					&\leq
					CR\|\Bu\|^{2}_{L^{4}(D_{R})}\cdot \|u^{r}\|_{L^{2}(D_{R})} \\
					&\leq
					CR\cdot R^{-1}\|\nabla\Bu\|^{2}_{L^{2}(\OR)}\cdot R^{-\frac{1}{2}}\|\nabla\Bu\|_{L^{2}(\OR)}\\
					&\leq CR^{-\frac{1}{2}}\|\nabla\Bu\|^{3}_{L^{2}(\OR)}.
				\end{split}		
			\end{equation}
			As for the terms on right hand of \eqref{eqparrP}, by \eqref{eqA86}--\eqref{eqA87} and Lemma \ref{L4ineqnoslipes} yield
			\begin{equation}\label{eqA94}
				\begin{split}
					\left| \int_{R-1}^{R}\int_{f_{1}(r)}^{f_{2}(r)}(\partial_{r}u^{r}\partial_{r}\Psi^{r}_{R}+\partial_{z}u^{r}\partial_{z}\Psi^{r}_{R})\,dzdr\right|
					\leq& C\|(\partial_{r}, \partial_{z} )\Bu\|_{L^{2}(D_{R})}	\cdot\|(\partial_{r}, \partial_{z} )\Psi^{r}_{R}\|_{L^{2}(D_{R})}\\
					\leq& CR^{-\frac{1}{2}}\|\nabla\Bu\|_{L^{2}(\OR)}\cdot	R^{\frac{1}{2}}\|\nabla\Bu\|_{L^{2}(\OR)}\\
					\leq& C\|\nabla\Bu\|^{2}_{L^{2}(\OR)}
				\end{split}
			\end{equation}
			and
			\begin{equation}\label{eqA95}
				\begin{split}
					\left|\int_{R-1}^{R}\int_{f_{1}(r)}^{f_{2}(r)}\left[\left(\dfrac{1}{r}\partial_{r}-\dfrac{1}{r^2}\right)u^{r}\right]\Psi^{r}_{R}\,dzdr\right|
					\leq&C\left(R^{-1}+R^{-2}\right)\|\nabla\Bu\|_{L^{2}(D_{R})}\cdot\|\Psi^{r}_{R}\|_{L^{2}(D_{R})}\\
					\leq&C\left(R^{-1}+ R^{-2}\right)\|\nabla\Bu\|_{L^{2}(D_{R})}\cdot\| \partial_{z} \Psi^{r}_{R}\|_{L^{2}(D_{R})}\\
					\leq& C\left(R^{-\frac{3}{2}}+ R^{-\frac{5}{2}}\right)\|\nabla\Bu\|_{L^{2}(\OR)}\cdot  R^{\frac{1}{2}}\|\nabla\Bu\|_{L^{2}(\OR)}\\
					\leq&C\left(R^{-1}+ R^{-2}\right)\| \nabla\Bu\|^{2}_{L^{2}(\OR)}.
				\end{split}
			\end{equation}
			Furthermore, it holds that
			\begin{equation}\label{eqA96}
				\begin{split}
					\left|\int_{R-1}^{R}\int_{f_{1}(r)}^{f_{2}(r)}\left[(u^{r}\partial_{r}+u^{z}\partial_{z})u^{r}\right]\Psi^{r}_{R}\,dzdr\right|
					\leq& C\|\Bu\|_{L^{4}(D_{R})}\cdot\|(\partial_{r}, \partial_{z})u^{r}\|_{L^{2}(D_{R})}\cdot\|\Psi^{r}_{R}\|_{L^{4}(D_{R})}\\
					\leq&
					C\|\nabla\Bu\|_{L^{2}(D_{R})}\cdot R^{-\frac{1}{2}}\|\nabla\Bu\|_{L^{2}(\OR)}\cdot \|\nabla\Psi^{r}_{R}\|_{L^{2}(D_{R})}\\
					\leq&
					CR^{-\frac{1}{2}}\|\nabla\Bu\|_{L^{2}(\OR)} R^{-\frac{1}{2}}\|\nabla\Bu\|_{L^{2}(\OR)}  R^{\frac{1}{2}}\|\nabla\Bu\|_{L^{2}(\OR)}\\
					\leq&
					CR^{-\frac{1}{2}}\|\nabla\Bu\|^{3}_{L^{2}(\OR)}
				\end{split}
			\end{equation}
			and
			\begin{equation}\label{eqestiA332}
				\begin{split}
					\left|\int_{R-1}^{R}\int_{f_{1}(r)}^{f_{2}(r)}\left[\dfrac{(u^{\theta})^{2}}{r}\right]\Psi^{r}_{R}\,dzdr\right|
					\leq& CR^{-1}\|\Bu\|^{2}_{L^{4}(D_{R})}\cdot\|\Psi^{r}_{R}\|_{L^{2}(D_{R})}\\ 
					\leq&
					CR^{-2}\|\nabla\Bu\|^{2}_{L^{2}(\OR)}\cdot R^{\frac{1}{2}}\|\nabla\Bu\|_{L^{2}(\OR)}\\	
					\leq&
					CR^{-\frac{3}{2}}\|\nabla\Bu\|^{3}_{L^{2}(\OR)}.
				\end{split}
			\end{equation}
			Combining the estimates \eqref{eqA94}--\eqref{eqestiA332}, one derives
			\begin{equation}\label{eqA97}
				\left|\int_{R-1}^{R}\int_{f_{1}(r)}^{f_{2}(r)}\partial_{r}P\Psi^{r}_{R}\,dzdr\right|\leq C\|\nabla\Bu\|^{2}_{L^{2}(\OR)}+CR^{-\frac{1}{2}}\|\nabla\Bu\|^{3}_{L^{2}(\OR)}.
			\end{equation}
			Similarly, one has
			\begin{equation}\label{eqA98}
				\left|\int_{R-1}^{R}\int_{f_{1}(r)}^{f_{2}(r)}\partial_{z}P\Psi^{z}_{R}\,dzdr\right|\leq C\|\nabla\Bu\|^{2}_{L^{2}(\OR)}+CR^{-\frac{1}{2}}\|\nabla\Bu\|^{3}_{L^{2}(\OR)}.
			\end{equation}
			Combining the estimates  \eqref{eqA93}--\eqref{eqA92} and
			\eqref{eqA97}--\eqref{eqA98}, one obtains that
			\begin{equation}\label{eqA99}
				\int_{\Omega}|\nabla \Bu|^{2}\varphi_{R}\,d\Bx\leq C\|\nabla\Bu\|^{2}_{L^{2}(\OR)}+CR^{-\frac{1}{2}}\|\nabla\Bu\|^{3}_{L^{2}(\OR)}.
			\end{equation}
			
			Let
			\begin{equation}\label{11eqYRA100}
				Y(R)=\int_{\Omega}|\nabla\Bu|^{2}\varphi_{R}\,d\Bx.
			\end{equation}
			The explicit form \eqref{cut-off} of $\varphi_{R}(r)$ gives
			\[
			Y(R)
			=2\pi\left(\int_{0}^{R-1}\int_{f_{1}(r)}^{f_{2}(r)}|\nabla\Bu(r,z)|^{2}r\,dzdr+\int_{R-1}^{R}\int_{f_{1}(r)}^{f_{2}(r)}|\nabla\Bu(r,z)|^{2}(R-r)r\,dzdr\right)
			\]
			and
			\[
			Y^{\prime}(R)
			=2\pi \int_{R-1}^{R}\int_{f_{1}(r)}^{f_{2}(r)}|\nabla\Bu(r,z)|^{2}r\,dzdr
			=\int_{\OR}|\nabla\Bu|^{2}\,d\Bx.
			\]
			Hence the estimate \eqref{eqA99} can be written as
			\begin{equation}\label{eqA103}
				Y(R)\leq CY^{\prime}(R)+CR^{-\frac{1}{2}} [Y^{\prime}(R)]^{\frac{3}{2}}.
			\end{equation}
			It follows from  Lemma \ref{le:differineq} (b)
			that if $Y(R)$ is not identically zero, then
			\[
			\varliminf_{R\rightarrow +\infty}R^{-4}    Y(R)>0.
			\]
			Note that $\displaystyle\varliminf_{R \rightarrow + \infty} R^{-4} Y(R)=0$,  therefore $Y(R)$ must be identically zero,
			and so $\nabla\Bu\equiv0$. This, together with  the no-slip boundary conditions  finishes the proof   for Case (b) of Theorem \ref{th:01} ($\beta=0$).
			
			\emph{Step 3.} {\emph{Proof  for Case (a) of Theorem \ref{th:01} ($\beta=0$)}.} Next  we estimate the  terms on the right hand side of \eqref{mueqine} in a different way. Using H\"older inequality and Poincar\'{e} inequality \eqref{Poincare1} yield
			\begin{equation}\label{eqA104}
				\begin{split}
					\left|\int_{\Omega}\nabla\varphi_{R}\cdot \nabla\Bu\cdot\Bu\,d\Bx \right|
					&\leq C\|\nabla\Bu\|_{L^{2}(\OR)}\|\Bu\|_{L^{2}(\OR)}\\
					&\leq CR^{\frac{1}{2}}\|\Bu\|_{L^{\infty}(\OR)}\|\nabla\Bu\|_{L^{2}(\OR)}
				\end{split}
			\end{equation}
			and
			\begin{equation}\label{eqA105}
				\begin{split}
					\left|\int_{\Omega}\frac{1}{2}\left|\Bu \right|^2\Bu\cdot \nabla \varphi_{R}\,d\Bx\right|
					&=
					\pi\left|\int_{R-1}^{R}\int_{f_{1}(r)}^{f_{2}(r)}|\Bu|^{2}u^{r}r\,dzdr\right| \\
					&\leq CR\|\Bu\|^{2}_{L^{\infty}(D_{R})}\|u^{r}\|_{L^{2}(D_{R})}\\
					&\leq
					CR^{\frac{1}{2}}\|\Bu\|^{2}_{L^{\infty}(\OR)}\|\nabla\Bu\|_{L^{2}(\OR)}.
				\end{split}
			\end{equation}
			As for the terms on right hand of \eqref{eqparrP}, by \eqref{eqA86}--\eqref{eqA87} and Poincar\'{e} inequality \eqref{Poincare1}, one has
			\begin{equation}\label{eqA106}
				\begin{split}
					&\left| \int_{R-1}^{R}\int_{f_{1}(r)}^{f_{2}(r)}(\partial_{r}u^{r}\partial_{r}\Psi^{r}_{R}+\partial_{z}u^{r}\partial_{z}\Psi^{r}_{R})\,dzdr\right|\\
					\leq&  C\|(\partial_{r},\partial_{z})u^{r}\|_{L^{2}(D_{R})}	\|(\partial_{r},\partial_{z})\Psi^{r}_{R}\|_{L^{2}(D_{R})}\\
					\leq&  CR^{-\frac{1}{2}}\|\nabla\Bu\|_{L^{2}(\OR)}\cdot\|ru^{r}\|_{L^{2}(D_{R})}\\
					\leq& CR^{-\frac{1}{2}}\|\nabla\Bu\|_{L^{2}(\OR)}\cdot R^{\frac{1}{2}}\|u^{r}\|_{L^{2}(\OR)}\\
					\leq&
					CR^{\frac{1}{2}} \|\nabla\Bu\|_{L^{2}(\OR)}\|\Bu\|_{L^{\infty}(\OR)}
				\end{split}
			\end{equation}
			and
			\begin{equation}\label{eqA107}
				\begin{split}
					&\left|\int_{R-1}^{R}\int_{f_{1}(r)}^{f_{2}(r)}\left[\left(\dfrac{1}{r}\partial_{r}-\dfrac{1}{r^2}\right)u^{r}\right]\Psi^{r}_{R}\,dzdr\right|\\
					\leq&
					C\left(R^{-1}\| \nabla \Bu\|_{L^{2}(D_{R})}+R^{-2}\| u^{r}\|_{L^{2}(D_{R})}\right)\|\Psi^{r}_{R}\|_{L^{2}(D_{R})}\\
					\leq&
					CR^{-1}\cdot R^{-\frac{1}{2}}\| \nabla\Bu\|_{L^{2}(\OR)}
					\cdot
					R^{\frac{1}{2}}\|u^{r}\|_{L^{2}(\OR)}\\
					\leq&
					CR^{-\frac{1}{2}}\|\nabla \Bu\|_{L^{2}(\OR)}
					\|\Bu\|_{L^{\infty}(\OR)}.
				\end{split}
			\end{equation}
			Similarly, it holds that
			\begin{equation}\label{eqA108}
				\begin{split}
					&\left|\int_{R-1}^{R}\int_{f_{1}(r)}^{f_{2}(r)}\left[(u^{r}\partial_{r}+u^{z}\partial_{z})u^{r}-\dfrac{(u^{\theta})^2}{r}\right]\Psi^{r}_{R}\,dzdr\right|\\	
					\leq& C\|\Bu\|_{L^{\infty}{(D_{R})}}\left(\|(\partial_{r},\partial_{z})u^{r}\|_{L^{2}(D_{R})}+R^{-1}\|u^{\theta}\|_{L^{2}(D_{R})}\right)\|\Psi^{r}_{R}\|_{L^{2}(D_{R})}\\
					\leq&
					C\|\Bu\|_{L^{\infty}(\OR)}\left(R^{-\frac{1}{2}}\|\nabla\Bu\|_{L^{2}(\OR)}+R^{-1}\|u^{\theta}\|_{L^{\infty}(\OR)}\right)\cdot R^{\frac{1}{2}}\|u^{r}\|_{L^{2}(\OR)}\\
					\leq&
					CR^{\frac{1}{2}}\|\nabla\Bu\|_{L^{2}(\OR)}	\|\Bu\|^{2}_{L^{\infty}(\OR)}.
				\end{split}
			\end{equation}
			Collecting	 the estimates \eqref{eqA106}--\eqref{eqA108}, one arrives at
			\begin{equation}\label{eqrpestiq1}
				\left|\int_{R-1}^{R}\int_{f_{1}(r)}^{f_{2}(r)}\partial_{r}P\Psi^{r}_{R}\,dzdr\right|
				\leq CR^{\frac{1}{2}}(\|\Bu\|_{L^{\infty}(\OR)}+\|\Bu\|^{2}_{L^{\infty}(\OR)})\|\nabla\Bu\|_{L^{2}(\OR)}.
			\end{equation}
			Similarly, one has
			\begin{equation}\label{eqA110}
				\left|\int_{R-1}^{R}\int_{f_{1}(r)}^{f_{2}(r)}\partial_{z}P\Psi^{z}_{R}\,dzdr\right|
				\leq CR^{\frac{1}{2}}(\|\Bu\|_{L^{\infty}(\OR)}+\|\Bu\|^{2}_{L^{\infty}(\OR)})\|\nabla\Bu\|_{L^{2}(\OR)}.
			\end{equation}
			Therefore, it can be shown that
			\begin{equation}\label{diineqA111}
				Y(R)\leq CR^{\frac{1}{2}}(\|\Bu\|_{L^{\infty}(\OR)}+\|\Bu\|^{2}_{L^{\infty}(\OR)})[Y^{\prime}(R)]^{\frac{1}{2}},		
			\end{equation}
			where $Y(R)$ is defined in \eqref{11eqYRA100}.		
			
			Suppose $\Bu$ is not identically equal to zero and $\Bu$ satisfies
			\[
			\lim\limits_{R\rightarrow +\infty}R^{-1}\sup\limits_{z\in[f_{1}(R),\,\, f_{2}(R)]}|\Bu(R,z)|=0.
			\]
			For any small $\epsilon>0$, there exists a constant $R_{0}(\epsilon)>2/\epsilon$
			such that
			\[
			\|\Bu\|_{L^{\infty}(\OR)}\leq\epsilon R \qquad \text{for any} \ R\geq
			R_{0}(\epsilon).
			\]
			Since $Y(R)>0$,	 the inequality \eqref{diineqA111} implies that
			\begin{equation}\label{eqA112}
				(C\epsilon)^{-2}R^{-5}\leq\dfrac{Y^{\prime}(R)}{[Y(R)]^{2}}.
			\end{equation}
			If $\Bu$ is not equal to zero, according to   Case (b) of Theorem \ref{th:01} ($\beta=0$), $Y(R)$ must be unbounded as $R\rightarrow +\infty$. For  $R$ sufficiently large, integrating \eqref{eqA112} over $[R,+\infty) $ one arrives at
			\[
			R^{-4}Y(R)\leq 4(C\epsilon)^{2}.
			\]
			Since $\epsilon$ can be arbitrarily small, this implies  Case (b) of Theorem \ref{th:01} ($\beta=0$), and leads to a contradiction with the assumption that $\Bu$ is not identically zero. This finishes the proof of Theorem \ref{th:01} with $\beta=0$.
		\end{proof}	
		\subsection{Navier boundary conditions case} \label{Sec32}
		This subsection devotes to the study on the Navier-Stokes system   with Navier boundary  conditions \eqref{bounNavboual}.
		
		\begin{proof}[Proof  of Theorem \ref{th:02} (In the case $\beta=0$)]
			
			The proof is divided into three steps.
			
			\emph{Step 1.} {\emph{Set up.}} Assume that $\Bu$ is a smooth solution to \eqref{eqsteadyns}
			with Navier boundary conditions \eqref{bounNavboual}.
			Multiplying the momentum equation in \eqref{eqsteadyns} by $\varphi_{R}(r)\Bu$, one obtains
			\begin{equation}\label{sec3eqA25}
				\int_{\Omega}-\Delta \Bu\cdot\varphi_{R}\Bu\,d\Bx +\int_{\Omega}(\Bu\cdot\nabla)\Bu\cdot \varphi_{R}\Bu\,d\Bx +\int_{\Omega}\nabla P\cdot\varphi_{R}\Bu\,d\Bx=0.
			\end{equation}	
			For  the first term of \eqref{sec3eqA25}, integrating by parts yields
			\begin{equation}\label{sec511eqidenti1}
				\int_{\Omega}-\Delta\Bu \cdot \varphi_{R}\Bu\,d\Bx
				=\int_{\Omega}|\nabla\Bu|^{2}\varphi_{R}+\nabla \varphi_{R}\cdot \nabla   \Bu\cdot\Bu\,d\Bx -\int_{\partial\Omega}\varphi_{R}\Bn \cdot \nabla\Bu\cdot\Bu\,dS.
			\end{equation}
			Note that on the boundary,
			\begin{equation}
				2\Bn\cdot\D(\Bu)\cdot\Bu-\Bn \cdot \nabla \Bu \cdot \Bu = \sum_{i,\, j=1}^3 n_j \partial_{x_i} u^j u^i.
			\end{equation}
			The impermeable boundary $\Bu\cdot\Bn=0$ implies that  $\nabla_{\Bt}(\Bu\cdot\Bn)=0$. Hence, it holds that
			\begin{equation}\label{sec511flatsid2}
				\sum_{i,\, j=1}^{3}n_{j} \partial_{x_i}u^{j}u^{i}
				=\Bu \cdot \nabla_{\Bt} (\Bu \cdot \Bn)-(\Bu\cdot \nabla_{\Bt} \Bn) \cdot\Bu
				=-(\Bu\cdot \nabla_{\Bt} \Bn) \cdot\Bu, \qquad \text{on} \  \partial \Omega.
			\end{equation}
			Combining \eqref{sec511eqidenti1}--\eqref{sec511flatsid2} with Navier boundary conditions \eqref{bounNavboual} yields
			\begin{equation}\label{sec511eqidentire}
				\begin{split}
					&\int_{\Omega}-\Delta\Bu \cdot \varphi_{R}\Bu  \,d\Bx\\
					=&\int_{\Omega}|\nabla\Bu|^{2}\varphi_{R}+\nabla \varphi_{R}\cdot \nabla  \Bu\cdot\Bu  \,d\Bx-\int_{\partial\Omega}(\Bu\cdot \nabla_{\Bt} \Bn) \cdot   \Bu\varphi_{R}\,dS +2\alpha \int_{\partial\Omega}|\Bu|^{2}\varphi_{R} \,dS.
				\end{split}
			\end{equation}
			On the other hand,
			\[
			\begin{split}
				&\int_{\Omega}-\Delta\Bu \cdot \varphi_{R}\Bu  \,d\Bx=-2\int_{\Omega}{\rm div}\D(\Bu) \cdot \varphi_{R}\Bu \,d\Bx\\
				=&
				2\int_{\Omega}|\D(\Bu)|^{2} \varphi_{R}  \,d\Bx+	2\int_{\Omega}\nabla\varphi_{R}\cdot\D(\Bu)\cdot \Bu  \,d\Bx+2\alpha\int_{\partial\Omega}|\Bu|^{2}\varphi_{R}\,dS.
			\end{split}
			\]
			This, together with \eqref{sec511eqidentire} gives
			\[
			\begin{split}
				&\int_{\Omega}|\nabla\Bu|^{2}\varphi_{R}\,d\Bx\\
				=&
				2\int_{\Omega}|\D(\Bu)|^{2} \varphi_{R}  +\nabla\varphi_{R}\cdot\D(\Bu)\cdot \Bu\,d\Bx-\int_{\Omega}\nabla \varphi_{R}\cdot \nabla\Bu\cdot\Bu  \,d\Bx+\int_{\partial\Omega}(\Bu\cdot \nabla_{\Bt} \Bn) \cdot   \Bu\varphi_{R}\,dS\\
				\leq&
				2\int_{\Omega}|\D(\Bu)|^{2} \varphi_{R}  \,d\Bx+3\int_{\Omega}|\nabla \varphi_{R}||\nabla\Bu||\Bu|\,d\Bx+\|\nabla_{\Bt} \Bn\|_{L^{\infty}(\partial\Omega)}\int_{\partial\Omega}|\Bu|^{2}\varphi_{R}\,dS\\
				\leq & C \left[
				2\int_{\Omega}|\D(\Bu)|^{2} \varphi_{R}  \,d\Bx+\int_{\Omega}|\nabla \varphi_{R}||\nabla\Bu||\Bu|\,d\Bx+2\alpha\int_{\partial\Omega}|\Bu|^{2}\varphi_{R} \,dS \right],
			\end{split}
			\]
			where the boundedness of $\|\nabla_{\Bt} \Bn\|_{L^{\infty}(\partial\Omega)}$ has been used.
			In conclusion, one obtains
			\begin{equation}\label{eqconeqBu}
				\begin{split}
					&\int_{\Omega}|\nabla\Bu|^{2}\varphi_{R}\,d\Bx+2\alpha\int_{\partial\Omega} \left|\Bu\right|^2\varphi_{R}\,dS \\
					\leq& C\left[	\left|\int_{\Omega}-\Delta\Bu \cdot \varphi_{R}\Bu  \,d\Bx\right|+\int_{\Omega}|\nabla\varphi_{R}||\nabla\Bu||\Bu|\,d\Bx \right]\\
					\leq& C\left[\left|\int_{\Omega}( \Bu\cdot\nabla)\Bu\cdot
					\varphi_{R}\Bu\,d\Bx\right| +\left|\int_{\Omega}\nabla P\cdot\varphi_{R}\Bu \,d\Bx\right|+\int_{\Omega}|\nabla\varphi_{R}|
					|\nabla\Bu||\Bu|\,d\Bx\right]\\
					\leq&C \left[\int_{\Omega}|\nabla\varphi_{R}| |\nabla\Bu||\Bu| \,d\Bx+ \left|\int_{\Omega}|\Bu|^{2}\Bu\cdot \nabla\varphi_{R} \,d\Bx\right| +  \left| \int_\Omega P \Bu \cdot \nabla \varphi_R \,d\Bx\right| \right].
				\end{split}
			\end{equation}
			Note that
			\begin{equation}\label{sec32eqA29}
				\int_{\Omega}P\Bu \cdot \nabla \varphi_{R} \,d\Bx
				=-2\pi \int_{R-1}^{R}
				\int_{f_{1}(r)}^{f_{2}(r)}Pu^{r}r\,
				dzdr.
			\end{equation} 
			The divergence free condition for the axisymmetric solution  is
			\[
			\partial_{r}(ru^{r})+\partial_{z}(ru^{z})=0.
			\]
			Note that for  the axisymmetric layer $\Omega=\left\{
			(x_{1}, x_{2}, x_{3}): (x_{1}, x_{2})\in \mathbb{R}^{2},\,\, f_{1}(r)<x_{3}<f_{2}(r) \right\}$, in cylindrical coordinates, the impermeable conditions 	$\Bu\cdot\Bn=0$ tells that
			\begin{equation}\label{eximform1}
				u^{r}(r,f_{i}(r))f^{\prime}_{i}(r)-u^{z}(r,f_{i}(r))=0, \quad i=1,2.
			\end{equation}
			Hence for every fixed $r\geq0$, one has
			\[
			\begin{split}
				\partial_{r}\int_{f_{1}(r)}^{f_{2}(r)}ru^{r}\,dz
				&=\int_{f_{1}(r)}^{f_{2}(r)}\partial_{r}(r u^{r})\, dz+ru^{r}(r, f_{2}(r))f^{\prime}_{2}(r)-ru^{r}(r, f_{1}(r))f^{\prime}_{1}(r)\\
				&=-\int_{f_{1}(r)}^{f_{2}(r)}\partial_{z}(r u^{z})\, dz+ru^{r}(r, f_{2}(r))f^{\prime}_{2}(r)-ru^{r}(r, f_{1}(r))f^{\prime}_{1}(r)
				=0,
			\end{split}
			\]
			where the last equality is due to \eqref{eximform1}.
			This implies that
			\begin{equation}\label{sec32eqA31}
				\int_{f_{1}(r)}^{f_{2}(r)}ru^{r} \,dz
				=\int_{R-1}^{R}\int_{f_{1}(r)}^{f_{2}(r)} ru^r \,dzdr=0.
			\end{equation}
			Owing to \eqref{sec32eqA31} and Lemma \ref{Bogovskii}, 
			there exists a vector valued function $\Ps_R \in H_0^1(D_R; \mathbb{R}^2)$ satisfying \eqref{eqA86}--\eqref{eqA87}.
			
			\emph{Step 2.} \emph {Proof for Case (b) of Theorem \ref{th:02} ($\beta=0$).}
			We are ready to estimate  the right hand side of \eqref{eqconeqBu}.
			For the  first two terms, using H\"older inequality and 	
			Lemma \ref{1Nabineqnoslipes}, one obtains
			\begin{equation}\label{rese5basces1}
				\begin{split}
					\int_{\Omega}|\nabla\varphi_{R}|
					|\nabla\Bu||\Bu| \,d\Bx
					\leq& CR\|\nabla\Bu\|_{L^{2}(D_{R})}\cdot \|\Bu\|_{L^{2}(D_{R})}\\
					\leq& CR\|\nabla\Bu\|_{L^{2}(D_{R})}\left(\|\nabla\Bu\|_{L^{2}(D_{R})}+\|\Bu\|_{L^{2}(\partial D_{R}\cap \partial \Omega)}\right)\\
					\leq& C\|\nabla\Bu\|_{L^{2}(\OR)}\left(\|\nabla\Bu\|_{L^{2}(\OR)}+\|\Bu\|_{L^{2}(\partial\OR\cap \partial \Omega)}\right)
				\end{split}
			\end{equation}
			and
			\begin{equation}\label{res5basces2}
				\begin{split}
					\left|\int_{\Omega}\left|\Bu \right|^2\Bu\cdot \nabla \varphi_{R} \,d\Bx\right|\leq& CR\|\Bu\|^{2}_{L^{4}(D_{R})}\cdot\|u^{r}\|_{L^{2}(D_{R})}\\
					\leq&
					CR\left(\|\nabla\Bu\|_{L^{2}(D_{R})}+\|\Bu\|_{L^{2}(\partial D_{R}\cap\partial \Omega)}\right)^{3}\\
					\leq& CR^{-\frac{1}{2}}\left(\|\nabla\Bu\|_{L^{2}(\OR)}+\|\Bu\|_{L^{2}(\partial\OR\cap \partial \Omega)}\right)^{3}.
				\end{split}
			\end{equation}
			As for the right hand of \eqref{sec32eqA29}, by \eqref{eqA87}, \eqref{eqparrP}--\eqref{eqparpazP}
			and 	
			Lemma \ref{1Nabineqnoslipes}, one has
			\begin{equation}\label{sec51eqA41}
				\begin{split}
					\left| \int_{R-1}^{R}\int_{f_{1}(r)}^{f_{2}(r)}(\partial_{r}u^{r}\partial_{r}\Psi_{R}^{r}+\partial_{z}u^{r}\partial_{z}\Psi_{R}^{r})\,dzdr\right|
					\leq&
					C\|(\partial_{r}, \partial_{z})u^{r}\|_{L^{2}(D_{R})}\cdot\|(\partial_{r}, \partial_{z})\Psi_{R}^{r}\|_{L^{2}(D_{R})} \\
					\leq& CR^{-\frac{1}{2}}\|\nabla\Bu\|_{L^{2}(\OR)}\cdot	R\|u^{r}\|_{L^{2}(D_{R})}\\
					\leq&
					C\left(\|\nabla\Bu\|_{L^{2}(\OR)}+\|\Bu\|_{L^{2}(\partial\OR\cap \partial \Omega)}\right)^{2}
				\end{split}
			\end{equation}
			and
			\begin{equation}\label{sec52eqA42}
				\begin{split}
					&\left|\int_{R-1}^{R}\int_{f_{1}(r)}^{f_{2}(r)}\left[\left(\dfrac{1}{r}\partial_{r}-\dfrac{1}{r^2}\right)u^{r}\right]\Psi_{R}^{r} \, dzdr\right|\\
					\leq&	
					CR^{-1} \cdot R^{-\frac{1}{2}}\left(\|\nabla\Bu\|_{L^{2}(\OR)}+\|\Bu\|_{L^{2}(\partial\OR\cap \partial \Omega)}\right)
					\cdot R\|u^{r}\|_{L^{2}(D_{R})}\\
					\leq&
					CR^{-1}\left(\|\nabla\Bu\|_{L^{2}(\OR)}+\|\Bu\|_{L^{2}(\partial\OR\cap \partial \Omega)}\right)^{2}.
				\end{split}
			\end{equation}
			Furthermore, it holds that
			\begin{equation}\label{sec53eqA43}
				\begin{split}
					&\left|\int_{R-1}^{R}\int_{f_{1}(r)}^{f_{2}(r)}\left[(u^{r}\partial_{r}+u^{z}\partial_{z})u^{r}\right]\Psi_{R}^{r} \, dzdr\right|\\	
					\leq&
					C\|\Bu\|_{L^{4}(D_{R})}\|(\partial_{r}, \partial_{z})u^{r}\|_{L^{2}(D_{R})}\cdot \|\Psi_{R}^{r}\|_{L^{4}(D_{R})}\\
					\leq&
					CR^{-\frac{1}{2}}\left(\|\nabla\Bu\|_{L^{2}(\OR)}+\|\Bu\|_{L^{2}(\partial\OR\cap \partial \Omega)}\right)\cdot R^{-\frac{1}{2}}\|\nabla \Bu\|_{L^{2}(\OR)}\cdot\|(\partial_{r},\partial_{z})\Psi_{R}^{r}\|_{L^{2}(D_{R})}\\
					\leq&
					CR^{-1}\left(\|\nabla\Bu\|_{L^{2}(\OR)}+\|\Bu\|_{L^{2}(\partial\OR\cap \partial \Omega)}\right)\cdot \|\nabla \Bu\|_{L^{2}(\OR)}\cdot R\|u^{r}\|_{L^{2}(D_{R})}\\
					\leq&
					CR^{-\frac{1}{2}}\left(\|\nabla\Bu\|_{L^{2}(\OR)}+\|\Bu\|_{L^{2}(\partial\OR\cap \partial \Omega)}\right)^{3}
				\end{split}
			\end{equation}
			and
			\begin{equation}\label{sec53eqA443}
				\begin{split}
					&\left|\int_{R-1}^{R}\int_{f_{1}(r)}^{f_{2}(r)}\left[\dfrac{(u^{\theta})^2}{r}\right]\Psi_{R}^{r} \, dzdr\right|\\
					\leq&
					CR^{-1}\|\Bu\|^{2}_{L^{4}(D_{R})} \cdot \|\Psi_{R}^{r}\|_{L^{2}(D_{R})}\\
					\leq&
					CR^{-1}\cdot R^{-1}\left(\|\nabla\Bu\|_{L^{2}(\OR)}+\|\Bu\|_{L^{2}(\partial\OR\cap \partial \Omega)}\right)^{2} \cdot R\|u^{r}\|_{L^{2}(D_{R})}\\
					\leq&
					CR^{-\frac{3}{2}}\left(\|\nabla\Bu\|_{L^{2}(\OR)}+\|\Bu\|_{L^{2}(\partial\OR\cap \partial \Omega)}\right)^{3}.
				\end{split}
			\end{equation}
			Collecting the estimates \eqref{sec51eqA41}--\eqref{sec53eqA443} yields
			\begin{equation}\label{eqse5A441}
				\begin{split}
					&\left|\int_{R-1}^{R}\int_{f_{1}(r)}^{f_{2}(r)}\partial_{r}P\Psi^{r}_{R}\,dzdr\right|\\
					\leq& C\left(\|\nabla\Bu\|_{L^{2}(\OR)}+\|\Bu\|_{L^{2}(\partial\OR\cap\partial\Omega)}\right)^{2}+CR^{-\frac{1}{2}}\left(\|\nabla\Bu\|_{L^{2}(\OR)}+\|\Bu\|_{L^{2}(\partial\OR\cap \partial \Omega)}\right)^{3}.
				\end{split}
			\end{equation}
			Similarly, one has
			\begin{equation}\label{eqsec5A452}
				\begin{split}
					&\left|\int_{R-1}^{R}\int_{f_{1}(r)}^{f_{2}(r)}\partial_{z}P\Psi^{z}_{R}\,dzdr\right|\\
					\leq& C\left(\|\nabla\Bu\|_{L^{2}(\OR)}+\|\Bu\|_{L^{2}(\partial\OR\cap\partial\Omega)}\right)^{2}+CR^{-\frac{1}{2}}\left(\|\nabla\Bu\|_{L^{2}(\OR)}+\|\Bu\|_{L^{2}(\partial\OR\cap \partial \Omega)}\right)^{3}.
				\end{split}
			\end{equation}
			Combining \eqref{rese5basces1}--\eqref{res5basces2} and \eqref{eqse5A441}--\eqref{eqsec5A452}, one arrives at
			\begin{equation}\label{sec512eqBNIii34}
				\begin{split}
					&\int_{\Omega}|\nabla\Bu|^{2}\varphi_{R}\,d\Bx+2\alpha\int_{\partial\Omega} \left|\Bu\right|^2\varphi_{R}\, dS\\
					\leq& C\left(\|\nabla\Bu\|_{L^{2}(\OR)}+\|\Bu\|_{L^{2}(\partial\OR\cap\partial\Omega)}\right)^{2}+CR^{-\frac{1}{2}}\left(\|\nabla\Bu\|_{L^{2}(\OR)}+\|\Bu\|_{L^{2}(\partial\OR\cap \partial \Omega)}\right)^{3}.
				\end{split}
			\end{equation}
			
			Let
			\begin{equation}\label{eqZRinsec5}
				\begin{split}
					Z(R)
					=\int_{\Omega}|\nabla \Bu|^{2}\varphi_{R}\, d\Bx+2\alpha\int_{\partial\Omega}|\Bu|^{2}
					\varphi_{R}\, dS.
				\end{split}
			\end{equation}
			Direct computations give
			\[
			Z^{\prime}(R)=\int_{\OR}|\nabla\Bu|^{2}\,d\Bx+2\alpha\int_{\partial\OR\cap \partial \Omega}|\Bu|^{2}\,dS.
			\]
			Hence the estimate \eqref{sec512eqBNIii34} can be written as
			\[
			Z(R)\leq CZ^{\prime}(R)+CR^{-\frac{1}{2}}\left[Z^{\prime}(R)\right]^{\frac{3}{2}}.
			\]
			As  the proof in Subsection \ref{Sec31}, this implies $Z(R)\equiv 0$ and  $\Bu$ is a constant.  Furthermore, the axisymmetry of $\Bu$ yields $u^{r}=u^{\theta}\equiv0$,  together with impermeable conditions \eqref{eximform1} implies that $u^{z}\equiv0$.  Hence the proof  for Case (b) of Theorem \ref{th:02} ($\beta=0$)  is completed.
			
			\emph{Step 3.} \emph {Brief proof for Case (a) of Theorem \ref{th:02} ($\beta=0$).} It is similar with the proof  for  Case (a) of Theorem \ref{th:01} ($\beta=0$). Note that \eqref{sec32eqA31} gives the Poincar\'e for $u^{r}$, i.e.,
			\[
			\|u^{r}\|_{L^{2}(\OR)}\leq C\|\partial_{z} u^{r}\|_{L^{2}(\OR)}.
			\]
			So it holds that \eqref{eqA104}--\eqref{eqA110}. Collecting the estimates \eqref{eqconeqBu}, \eqref{eqA104}--\eqref{eqA110} gives
			\[
			Z(R)\leq CR^{\frac{1}{2}}\left(\|\Bu\|_{L^{\infty}(\OR)}+\|\Bu\|^{2}_{L^{\infty}(\OR)}\right) \left[Z^{\prime}(R)\right]^{\frac{1}{2}},
			\]
			where $Z(R)$  is defined in \eqref{eqZRinsec5}.
			According to the proof of Theorem \ref{th:01} with $\beta=0$ and the Case (b) of Theorem \ref{th:02} with $\beta=0$, one has $\Bu\equiv0$.
			This completes the proof of Theorem \ref{th:02} with $\beta=0$.
		\end{proof}
		\section{Flows in a growing layer with unbounded outlets}\label{Sec4}	
		In this section, we consider the  axisymmetric flow in a growing layer with unbounded outlets.
		The new ingredients in the proof are the choice of cut-off function   and truncated domains, which are defined in \eqref{cut-off} and \eqref{truncatdomOR} in Section \ref{Sec3}.
		First, we denote
		\[
		\gamma^{\ast}=(4\gamma)^{-1},
		\]
		where $\gamma$ is given in \eqref{eqdomainrate}.
		Clearly, one has
		\begin{equation*}
			\|f^{\prime}\|_{L^{\infty}(\partial\Omega)}=2\gamma =(2\gamma^{\ast})^{-1}
		\end{equation*}
		and
		\begin{equation}\label{domainc1}
			\frac{1}{2}f(r)\leq f(\xi)\leq \frac{3}{2}f(r) \quad \text{for any}
			\,\, \xi\in [r-\gamma^{\ast}f(r),\, r+\gamma^{\ast}f(r)].
		\end{equation}
		Note that $f(R)\leq O(R^{\beta})$, $0\leq \beta<\dfrac{4}{5}$. For $R$ large enough, it holds that $R> \gamma^{\ast} f(R)$.
		Then we define a new  cut-off function
		\begin{equation} \label{newcut-off}
			\hat{\varphi}_R(r) = \left\{ \ba
			&\gamma^{\ast},\ \ \ \ \ \ \ \ \ \ r < R-\gamma^{\ast}f(R), \\
			&\frac{R-r}{f(R)},\ \ \ \ R-\gamma^{\ast}f(R) \leq r \leq R, \\
			&0, \ \ \ \ \ \ \ \ \ \ r >  R.
			\ea  \right.
		\end{equation}
		Clearly, it holds
		\begin{equation}\label{fRderivative1}
			\left|\frac{\partial \hat{\varphi}_R(r) }{\partial r} \right|
			=\frac{1}{f(R)}, \quad r\in (R-\gamma^{\ast}f(R), \, R)
		\end{equation}
		and
		\begin{equation}\label{eqvarbigf23}
			\frac{\partial \hat{\varphi}_R(r)}{\partial R} =\frac{1}{f(R)}\left[1-\frac{R-r}{f(R)}f^{\prime}(R)\right]\geq
			\frac{1}{2}[f(R)]^{-1}.
		\end{equation}
		The new truncated domains are defined as follows,  
		\[
		\widetilde{D_{R}}
		=\left\{(r,z): R-\gamma^{\ast}f(R)<r< R,\,\, f_{1}(r)<z<f_{2}(r)\right\}
		\]
		and
		\[\qquad
		\widetilde{\mathscr{O}_{R}} =
		\left\{
		\Bx\in\mathbb{R}^{3}:
		(x_{1}, x_{2})\in B_{R} \setminus \overline{B_{R-\gamma^{\ast}f(R)}},\,\,
		f_{1}(r)<x_{3}< f_{2}(r) \right\},
		\]
		where
		$r=\sqrt{x_{1}^{2}+x_{2}^{2}}$ and
		$B_{R}=\left\{(x_{1}, x_{2}):x_1^2+x_2^2 <R^{2} \right\}$.
		\subsection{No-slip boundary conditions case}
		\label{sec41}
	\begin{proof}[Proof   of Theorem \ref{th:01}] 	The proof contains three steps.
		
		\emph{Step 1.} {\emph{Set up.}} As proved in the Subsection \ref{Sec31}, multiplying the first equation in \eqref{eqsteadyns}  by $\hat{\varphi}_{R}(r)\Bu$ and integrating by parts, one obtains
		\begin{equation}\label{mueqine22new}
			\int_{\Omega}|\nabla\Bu|^{2}\hat{\varphi}_{R}\,d\Bx=-\int_{\Omega}\nabla\hat{\varphi}_{R}\cdot\nabla\Bu\cdot \Bu\,d\Bx+\int_{\Omega}\dfrac{1}{2}|\Bu|^{2}\Bu\cdot\nabla\hat{\varphi}_{R}\,d\Bx+\int_{\Omega}P\Bu\cdot \nabla \hat{\varphi}_{R}\, d\Bx.
		\end{equation}
		By \eqref{fRderivative1}, one has
		\begin{equation}\label{basic1sec4}
			\begin{split}
				\left|\int_{\Omega}\nabla\hat{\varphi}_{R}\cdot \nabla\Bu\cdot\Bu \,d\Bx \right|
				&=\left|\int_{0}^{2\pi}\int_{R-\gamma^{\ast} f(R)}^{R}\int_{f_{1}(r)}^{f_{2}(r)}\nabla\hat{\varphi}_{R}\cdot \nabla\Bu\cdot\Bu r\, dzdrd\theta\right|\\
				&\leq
				CR\cdot [f(R)]^{-1}\|\nabla\Bu\|_{L^{2}(\wtDR)}\|\Bu\|_{L^{2}(\wtDR)}\\
				&\leq
				C[f(R)]^{-1}\|\nabla\Bu\|_{L^{2}(\wtOR)}\cdot M_{1}(\wtDR)\|\nabla\Bu\|_{L^{2}(\wtOR)}\\
				&\leq
				C\|\nabla\Bu\|^{2}_{L^{2}(\wtOR)},  \end{split}
		\end{equation}
		where the third line is due to Lemma \ref{L4ineqnoslipes} and $M_1(\wtDR)$ satisfies 
		\[
		C^{-1}f(R)\leq M_{1}(\wtDR)\leq Cf(R).
		\]
		Similarly,
		\begin{equation}\label{basic2sec4}
			\begin{split}
				\left|\int_{\Omega}|\Bu|^{2}\Bu\cdot \nabla\hat{\varphi}_{R}\, d\Bx\right|
				=&
				[f(R)]^{-1}\left|\int_{0}^{2\pi} \int_{R-\gamma^{\ast} f(R)}^{R}\int_{f_{1}(r)}^{f_{2}(r)}|\Bu|^{2}u^{r} r\, dzdrd\theta\right|\\
				\leq&
				C R\cdot [f(R)]^{-1}\cdot \|\Bu\|^{2}_{L^{4}(\wtDR)}\|u^{r}\|_{L^{2}(\wtDR)}\\
				\leq&
				CR\cdot [f(R)]^{-1}\cdot M^{2}_{2}(\wtDR)M_{1}(\wtDR)\|\nabla\Bu\|^{3}_{L^{2}(\wtDR)}\\
				\leq&
				CR^{-\frac{1}{2}}f(R)
				\|\nabla\Bu\|^{3}_{L^{2}(\wtOR)},
			\end{split}
		\end{equation}
		where the third line is due to Lemma \ref{L4ineqnoslipes}, and here $M_{2}(\wtDR)$ satisfies
		\[
		C^{-1} [f(R)]^{\frac{1}{2}}\leq M_{2}(\wtDR)\leq
		C[f(R)]^{\frac{1}{2}}.
		\]
		Herein,
		\begin{equation}\label{eqP12re}
			\int_{\Omega}P\Bu\cdot \nabla \hat{\varphi}_{R}\,d\Bx=-\frac{2\pi}{f(R)}\int_{R-\gamma^{\ast} f(R)}^{R}\int_{f_{1}(r)}^{f_{2}(r)}Pu^{r}r\,dzdr.
		\end{equation}
		As proved in Subsection  \ref{Sec31}, it holds that
		\[
		\int_{f_{1}(r)}^{f_{2}(r)}ru^{r}\,dz=0 \quad \text{and}\quad \int_{R-\gamma^{\ast}f(R)}^{R}\int_{f_{1}(r)}^{f_{2}(r)}ru^{r}\,dzdr=0.	\]
		By virtue of  Lemma \ref{Bogovskii},
		there exists a vector valued function $\tilde{\Ps}_{R}(r,z)\in H^{1}_{0}(\wtDR;\, \mathbb{R}^{2})$ satisfying
		\begin{equation}\label{Bog1were}
			\partial_{r}\tilde{\Psi}^{r}_{R}+\partial_{z}\tilde{\Psi}^{z}_{R}=ru^{r}
		\end{equation}
		and
		\begin{equation}\label{eqbogindm2}
			\|\partial_{r}\tilde{\Ps}_{R}\|_{L^{2}(\wtDR)}+	\|\partial_{z}\tilde{\Ps}_{R}\|_{L^{2}(\wtDR)}
			\leq C\|ru^{r}\|_{L^{2}(\wtDR)}
			\leq CR^{\frac{1}{2}}\|u^{r}\|_{L^{2}(\wtOR)},
		\end{equation}
		where the constant $C$ is independent $R$, since the domain $\wtDR$ satisfies  \eqref{domainc1}
		and the norm for the Bogovskii  map is bounded by \eqref{eqBoges1}--\eqref{eqBoges2es}.  It can be referred  to \cite{SWX} for a detailed discussion.
		Therefore, combining  \eqref{eqP12re} and \eqref{Bog1were}, one derives
		\begin{equation}\label{eqPressesunbf}
			\begin{split}
				\int_{\Omega}P\Bu\cdot \nabla \hat{\varphi}_{R}\,d\Bx
				=&-\frac{2\pi}{f(R)}\int_{R-\gamma^{\ast} f(R)}^{R}\int_{f_{1}(r)}^{f_{2}(r)}
				P(\partial_{r}\tilde{\Psi}^{r}_{R}+\partial_{z}\tilde{\Psi}^{z}_{R})\,dzdr\\
				=&\, \frac{2\pi}{f(R)}\int_{R-\gamma^{\ast}f(R)}^{R}\int_{f_{1}(r)}^{f_{2}(r)}(\partial_{r}P\tilde{\Psi}^{r}_{R}+\partial_{z}P\tilde{\Psi}^{z}_{R})\,dzdr.
			\end{split}
		\end{equation}
		According to \eqref{eqparrP} and \eqref{eqparpazP}, one has
		\begin{equation}\label{eqrPqw1257}
			\begin{split}
				&\int_{R-\gamma^{\ast} f(R)}^{R}\int_{f_{1}(r)}^{f_{2}(r)}\partial_{r}P\tilde{\Psi}^{r}_{R}\,dzdr\\
				=&
				-\int_{R-\gamma^{\ast}f(R)}^{R}\int_{f_{1}(r)}^{f_{2}(r)}(\partial_{r}u^{r}\partial_{r}\tilde{\Psi}^{r}_{R}+\partial_{z}u^{r}\partial_{z}\tilde{\Psi}^{r}_{R})-\left[\left(\dfrac{1}{r}\partial_{r}-\dfrac{1}{r^2}\right)u^{r}\right]\tilde{\Psi}^{r}_{R}\,dzdr\\
				&-\int_{R-\gamma^{\ast}f(R)}^{R}\int_{f_{1}(r)}^{f_{2}(r)}\left[(u^{r}\partial_{r}+u^{z}\partial_{z})u^{r}-\dfrac{(u^{\theta})^2}{r}\right]\tilde{\Psi}^{r}_{R}\,dzdr
			\end{split}
		\end{equation}
		and
		\begin{equation}\label{eqrPqw234}
			\begin{split}
				&\int_{R-\gamma^{\ast} f(R)}^{R}\int_{f_{1}(r)}^{f_{2}(r)}\partial_{z}P\tilde{\Psi}^{z}_{R}\,dzdr\\
				=&
				-\int_{R-\gamma^{\ast}f(R)}^{R}\int_{f_{1}(r)}^{f_{2}(r)}\left[(\partial_{r}u^{z}\partial_{r}\tilde{\Psi}^{z}_{R}+\partial_{z}u^{z}\partial_{z}\tilde{\Psi}^{z}_{R})-\left(\dfrac{1}{r}\partial_{r}u^{z}\tilde{\Psi}^{z}_{R}\right)\right]\,dzdr\\
				&-\int_{R-\gamma^{\ast}f(R)}^{R}\int_{f_{1}(r)}^{f_{2}(r)}\left[(u^{r}\partial_{r}+u^{z}\partial_{z})u^{z}\right]\tilde{\Psi}^{z}_{R}\,dzdr.
			\end{split}
		\end{equation}
		\emph{Step 2.} {\emph{Proof for Case (b) of Theorem \ref{th:01}.}} Now we begin to estimate the right hand of \eqref{eqrPqw1257} and \eqref{eqrPqw234}.
		By \eqref{Bog1were}--\eqref{eqbogindm2} and Lemma \ref{L4ineqnoslipes},
		one obtains
		\begin{equation}\label{eqesPre1}
			\begin{split}
				&\left|\int_{R-\gamma^{\ast} f(R)}^{R}\int_{f_{1}(r)}^{f_{2}(r)}(\partial_{r}u^{r}\partial_{r}\tilde{\Psi}^{r}_{R}+\partial_{z}u^{r}\partial_{z}\tilde{\Psi}^{r}_{R})\,dzdr\right|\\
				\leq&
				C\|(\partial_{r},\partial_{z})u^{r}\|_{L^{2}(\wtDR)}\|(\partial_{r},\partial_{z})\tilde{\Psi}^{r}_{R}\|_{L^{2}(\wtDR)}\\
				\leq&
				CR^{-\frac{1}{2}}\|\nabla \Bu\|_{L^{2}(\wtOR)}\cdot \|ru^{r}\|_{L^{2}(\wtDR)}\\
				\leq&
				CR^{-\frac{1}{2}}\|\nabla \Bu\|_{L^{2}(\wtOR)}\cdot R^{\frac{1}{2}}M_{1}(\wtDR)\|\nabla\Bu\|_{L^{2}(\wtOR)}\\
				\leq&
				Cf(R)\|\nabla \Bu\|^{2}_{L^{2}(\wtOR)}
			\end{split}
		\end{equation}  	
		and
		\begin{equation}\label{eqesPre21n}
			\begin{split}
				&\left|\int_{R-\gamma^{\ast} f(R)}^{R}\int_{f_{1}(r)}^{f_{2}(r)}\left[\left(\frac{1}{r}\partial_{r}-\frac{1}{r^{2}}\right)u^{r}\right]\tilde{\Psi}^{r}_{R}\,dzdr\right|\\
				\leq& C\left(R^{-1}\|\partial_{r}u^{r}\|_{L^{2}(\wtDR)}+R^{-2}\|u^{r}\|_{L^{2}(\wtDR)}\right)\|\tilde{\Psi}^{r}_{R}\|_{L^{2}(\wtDR)}\\
				\leq& C\left(R^{-\frac{3}{2}}+R^{-\frac{5}{2}}M_{1}(\wtDR)\right)\|\nabla\Bu\|_{L^{2}(\wtOR)}\cdot M_{1}(\wtDR)\cdot R^{\frac{1}{2}}\|u^{r}\|_{L^{2}(\wtOR)}\\
				\leq& C\left(R^{-1}[f(R)]^{2}+R^{-2}[f(R)]^{3}\right)\|\nabla\Bu\|^{2}_{L^{2}(\wtOR)}.
			\end{split}
		\end{equation}
		Furthermore, it holds that
		\begin{equation}\label{eqessec41}
			\begin{split}
				&\left|\int_{R-\gamma^{\ast} f(R)}^{R}\int_{f_{1}(r)}^{f_{2}(r)}\left[\left(u^{r}\partial_{r}+u^{z}\partial_{z}\right)u^{r}\right]\tilde{\Psi}^{r}_{R}\,dzdr\right|\\
				\leq&
				C\|\Bu\|_{L^{4}(\wtDR)} \|(\partial_{r},\partial_{z})u^{r}\|_{L^{2}(\wtDR)}\cdot\|\tilde{\Psi}^{r}_{R}\|_{L^{4}(\wtDR)}\\
				\leq& CR^{-\frac{1}{2}}M_{2}(\wtDR)\|\nabla\Bu\|_{L^{2}(\wtOR)}\cdot R^{-\frac{1}{2}}\|\nabla\Bu\|_{L^{2}(\wtOR)}\cdot
				M_{2}(\wtDR)\|\nabla\tilde{\Psi}^{r}_{R}\|_{L^{2}(\wtDR)}\\
				\leq& CR^{-1}M^{2}_{2}\|\nabla\Bu\|^{2}_{L^{2}(\wtOR)}\|ru^{r}\|_{L^{2}(\wtDR)}\\
				\leq& CR^{-1}M^{2}_{2}\|\nabla\Bu\|^{2}_{L^{2}(\wtOR)}\cdot R^{\frac{1}{2}}M_{1}\|\nabla\Bu\|_{L^{2}(\wtOR)}\\
				\leq&
				CR^{-\frac{1}{2}}[f(R)]^{2}\|\nabla\Bu\|^{3}_{L^{2}(\wtOR)}
			\end{split}
		\end{equation}
		and
		\begin{equation}\label{eqesPre41}
			\begin{split}
				&\left|\int_{R-\gamma^{\ast} f(R)}^{R}\int_{f_{1}(r)}^{f_{2}(r)}\left[\frac{(u^{\theta})^{2}}{r}\right]\tilde{\Psi}^{r}_{R}\,dzdr\right|\\
				\leq&
				CR^{-1}\|\Bu\|^{2}_{L^{4}(\wtDR)}\cdot\|\tilde{\Psi}^{r}_{R}\|_{L^{2}(\wtDR)}\\
				\leq&
				CR^{-1}\cdot R^{-1} M^{2}_{2}(\wtDR)\|\nabla\Bu\|^{2}_{L^{2}(\wtOR)}\cdot M_{1}(\wtDR)\|\nabla\tilde{\Psi}^{r}_{R}\|_{L^{2}(\wtDR)}\\
				\leq&
				CR^{-2}M^{2}_{2}M_{1} \|\nabla\Bu\|^{2}_{L^{2}(\wtOR)}\cdot R^{\frac{1}{2}}M_{1}\|\nabla\Bu\|_{L^{2}(\wtOR)}\\
				\leq&
				CR^{-\frac{3}{2}}[f(R)]^{3}\|\nabla\Bu\|^{3}_{L^{2}(\wtOR)}.
			\end{split}
		\end{equation}
		Combining the estimates \eqref{eqesPre1}--\eqref{eqesPre41} and due to $f(R)\leq O(R^{\beta})$ as in \eqref{growboundabe1}, where $0\leq \beta<\dfrac{4}{5}$ is required, one derives
		\[
		\begin{split}
			\left|\int_{R-\gamma^{\ast} f(R)}^{R}\int_{f_{1}(r)}^{f_{2}(r)}\partial_{r}P\tilde{\Psi}^{r}_{R}\,dzdr\right|
			\leq Cf(R)\|\nabla \Bu\|^{2}_{L^{2}(\wtOR)}+CR^{-\frac{1}{2}}[f(R)]^{2}\|\nabla\Bu\|^{3}_{L^{2}(\wtOR)}.
		\end{split}
		\]
		Similarly, it holds that
		\[
		\begin{split}
			\left|\int_{R-\gamma^{\ast} f(R)}^{R}\int_{f_{1}(r)}^{f_{2}(r)}\partial_{z}P\tilde{\Psi}^{z}_{R}\,dzdr\right|
			\leq Cf(R)\|\nabla \Bu\|^{2}_{L^{2}(\wtOR)}+CR^{-\frac{1}{2}}[f(R)]^{2}\|\nabla\Bu\|^{3}_{L^{2}(\wtOR)}.
		\end{split}
		\]
		Thus from \eqref{eqPressesunbf}, one arrives at
		\[
		\left|\int_{\Omega}P\Bu\cdot \nabla \hat{\varphi}_{R}\,d\Bx\right|\leq C\|\nabla \Bu\|^{2}_{L^{2}(\wtOR)}+CR^{-\frac{1}{2}}f(R)\|\nabla\Bu\|^{3}_{L^{2}(\wtOR)}.
		\]
		This, together with \eqref{mueqine22new}, \eqref{basic1sec4}--\eqref{basic2sec4}, yields
		\begin{equation}\label{escon237}
			\int_{\Omega}|\nabla\Bu|^{2}\hat{\varphi}_{R}\,d\Bx
			\leq C\|\nabla\Bu\|^{2}_{L^{2}(\wtOR)}+CR^{-\frac{1}{2}}f(R)\|\nabla\Bu\|^{3}_{L^{2}(\wtOR)}.
		\end{equation}
		
		Let
		\begin{equation}\label{YRdefq1}
			\hat{Y}(R)=
			\int_{\Omega}|\nabla\Bu|^{2}\hat{\varphi}_{R}\,d\Bx.
		\end{equation}
		The explicit form \eqref{newcut-off} of   $\hat{\varphi}_{R}$  gives
		\begin{equation}
			\hat{Y}(R)
			=2\pi\left(\int_{0}^{R-\gamma^{\ast}f(R)}\int_{f_{1}(r)}^{f_{2}(r)}\gamma^{\ast}|\nabla\Bu|^{2}r\,dzdr+\int_{R-\gamma^{\ast}f(R)}^{R}\int_{f_{1}(r)}^{f_{2}(r)}|\nabla\Bu|^{2}\frac{R-r}{f(R)}r \,dzdr\right)
		\end{equation}
		and
		\begin{equation}\label{Yprimedef}
			\begin{split}
				\hat{Y}^{\prime}(R)=&2\pi\int_{R-\gamma^{\ast} f(R))}^{R}\int_{f_{1}(r)}^{f_{2}(r)}
				|\nabla\Bu|^{2}\partial_{R}  \hat{\varphi}_{R}\cdot r\,dzdr\\
				\geq& \frac{1}{2}[f(R)]^{-1}\int_{\wtOR}|\nabla\Bu|^{2}\,d\Bx,
			\end{split}
		\end{equation}
		where the second line is due to \eqref{eqvarbigf23}.
		Hence the estimate \eqref{escon237} can be written as
		\begin{equation}\label{eqesestYhat}
			\begin{split}
				\hat{Y}(R)
				\leq&
				C\left\{f(R)\hat{Y}^{\prime}(R)+R^{-\frac{1}{2}}[f(R)]^{\frac{5}{2}}[\hat{Y}^{\prime}(R)]^{\frac{3}{2}} \right\}\\
				\leq& C\left\{R^{\beta}\hat{Y}^{\prime}(R)+R^{\frac{5\beta-1}{2}}[\hat{Y}^{\prime}(R)]^{\frac{3}{2}} \right\}.
			\end{split}
		\end{equation}
		If $\hat{Y}(R)$ is not identically zero,  it from Case (b) of Lemma \ref{le:differineq},
		\begin{equation}\label{inYRinbet}
			\varliminf_{R \rightarrow + \infty}  R^{5\beta-4}\hat{Y}(R)>0.
		\end{equation}
		However, since \eqref{DassumDnoslip1} one has $\displaystyle\varliminf_{R \rightarrow + \infty}  R^{5\beta-4}\hat{Y}(R)=0$. The contradiction implies $\hat{Y}(R)\equiv0$.
		Hence, $\nabla\Bu\equiv 0$, together with the no-slip boundary conditions \eqref{curboundarynoslip} yields $\Bu\equiv 0$.
		This finishes the proof for Case (b) of Theorem \ref{th:01}.
		
		\emph{Step 3.} {\emph{Proof for Case (a) of Theorem \ref{th:01}.}} Next, we estimate the right hand side of \eqref{mueqine22new} in a different way.
		First, since $\gamma^{\ast} f(R)<R$, it holds that
		\[
		\begin{split}
			\left(\int_{0}^{2\pi}\int_{R-\gamma^{\ast}f(R)}^{R}\int_{f_{1}(r)}^{f_{2}(r)}\Bu^{2} r\, dz dr d\theta\right)^{\frac{1}{2}}
			&\leq 
			\left(\int_{0}^{2\pi}\int_{R-\gamma^{\ast}f(R)}^{R}\int_{f_{1}(r)}^{f_{2}(r)}r\,dz drd\theta\right)^{\frac{1}{2}}\|\Bu\|_{L^{\infty}(\wtOR)}\\
			&\leq CR^{\frac{1}{2}}f(R)  \|\Bu\|_{L^{\infty}(\wtOR)},
		\end{split}
		\]
		which implies
		\[
		\|\Bu\|_{L^{2}(\wtOR)}\leq CR^{\frac{1}{2}}f(R)  \|\Bu\|_{L^{\infty}(\wtOR)}.
		\]
		By H{\"o}lder inequality and Poincar\'e inequality \eqref{Poincare1}, one obtains
		\begin{equation}\label{eqesti1}
			\begin{split}
				\left|\int_{\Omega}\nabla\hat{\varphi}_{R}\cdot \nabla\Bu\cdot\Bu\,d\Bx\right|
				=&\left|\int_{0}^{2\pi}\int_{R-\gamma^{\ast} f(R)}^{R}\int_{f_{1}(r)}^{f_{2}(r)}\nabla\hat{\varphi}_{R}\cdot \nabla\Bu\cdot\Bu r\, dz dr d\theta\right|\\
				\leq& C[f(R)]^{-1}\|\nabla\Bu\|_{L^{2}(\wtOR)}\|\Bu\|_{L^{2}(\wtOR)}\\
				\leq& C[f(R)]^{-1}\|\nabla\Bu\|_{L^{2}(\wtOR)} \cdot R^{\frac{1}{2}}f(R)\|\Bu\|_{L^{\infty}(\wtOR)}\\
				\leq& CR^{\frac{1}{2}}\|\nabla\Bu\|_{L^{2}(\wtOR)} \|\Bu\|_{L^{\infty}(\wtOR)}
			\end{split}
		\end{equation}
		and
		\begin{equation}\label{eqestes2}
			\begin{split}
				\left|\int_{\Omega}|\Bu|^{2}\Bu\cdot \nabla\hat{\varphi}_{R}\, d\Bx\right|
				\leq&
				C[f(R)]^{-1}\cdot R\|\Bu\|^{2}_{L^{\infty}(\wtDR)}\int^{R}_{R-\gamma^{\ast}f(R)}\int_{f_{1}(r)}^{f_{2}(r)}|u^{r}|\,dzdr\\
				\leq&
				C[f(R)]^{-1}\cdot R\|\Bu\|^{2}_{L^{\infty}(\wtDR)}\cdot f(R)\|u^{r}\|_{L^{2}(\wtDR)}
				\\
				\leq&
				CR \|\Bu\|^{2}_{L^{\infty}(\wtOR)}\cdot M_{1}(\wtDR)\|\partial_{z} u^{r}\|_{L^{2}(\wtDR)}\\
				\leq& CR^{\frac{1}{2}}f(R)\|\Bu\|^{2}_{L^{\infty}(\wtOR)}\|\nabla \Bu\|_{L^{2}(\wtOR)}.
			\end{split}
		\end{equation}
		As for the right hand of \eqref{eqrPqw1257}, one has
		\begin{equation}\label{eqes1pre}
			\begin{split}
				&\left|\int_{R-\gamma^{\ast} f(R)}^{R}\int_{f_{1}(r)}^{f_{2}(r)}(\partial_{r}u^{r}\partial_{r}\tilde{\Psi}^{r}_{R}+\partial_{z}u^{r}\partial_{z}\tilde{\Psi}^{r}_{R})\,dzdr\right|\\
				\leq& C\|(\partial_{r},\partial_{z})u^{r}\|_{L^{2}(\wtDR)}\|(\partial_{r},\partial_{z})\tilde{\Psi}^{r}_{R}\|_{L^{2}(\wtDR)}\\
				\leq&
				CR^{-\frac{1}{2}}\|\nabla \Bu\|_{L^{2}(\wtOR)}\cdot R\|u^{r}\|_{L^{2}(\wtDR)}\\
				\leq& CR^{\frac{1}{2}}f(R)\|\nabla \Bu\|_{L^{2}(\wtOR)}\| \Bu\|_{L^{\infty}(\wtOR)}	
			\end{split}
		\end{equation}  	
		and
		\begin{equation}
			\begin{split}
				&\left|\int_{R-\gamma^{\ast} f(R)}^{R}\int_{f_{1}(r)}^{f_{2}(r)}\left[\left(\frac{1}{r}\partial_{r}-\frac{1}{r^{2}}\right)u^{r}\right]\tilde{\Psi}^{r}_{R}\,dzdr\right|\\
				\leq& C\left(R^{-1}\|\partial_{r}u^{r}\|_{L^{2}(\wtDR)}+R^{-2}\|u^{r}\|_{L^{2}(\wtDR)}\right)\|\tilde{\Psi}^{r}_{R}\|_{L^{2}(\wtDR)}\\
				\leq& C\left(R^{-\frac{3}{2}}+R^{-\frac{5}{2}}M_{1}(\wtDR)\right)\|\nabla\Bu\|_{L^{2}(\wtOR)}\cdot R^{\frac{1}{2}}M_{1}(\wtDR)\|u^{r}\|_{L^{2}(\wtOR)}\\
				\leq& C\left(R^{-1}M_{1}+R^{-2}M^{2}_{1}\right)\|\nabla\Bu\|_{L^{2}(\wtOR)}\cdot R^{\frac{1}{2}}f(R)
				\|\Bu\|_{L^{\infty}(\wtOR)}\\
				\leq& C\left(R^{-\frac{1}{2}}[f(R)]^{2}+R^{-\frac{3}{2}}[f(R)]^{3}\right)
				\|\nabla\Bu\|_{L^{2}(\wtOR)}\|\Bu\|_{L^{\infty}(\wtOR)}.
			\end{split}
		\end{equation}
		Furthermore, one obtains
		\begin{equation}\label{eqes3pre}
			\begin{split}
				&\left|\int_{R-\gamma^{\ast} f(R)}^{R}\int_{f_{1}(r)}^{f_{2}(r)}\left[\left(u^{r}\partial_{r}+u^{z}\partial_{z}\right)u^{r}-\frac{(u^{\theta})^{2}}{r}\right]\tilde{\Psi}^{r}_{R}\,dzdr\right|\\
				\leq&
				C\|\Bu\|_{L^{\infty}(\wtDR)}\left(\|(\partial_{r},\partial_{z})u^{r}\|_{L^{2}(\wtDR)}+R^{-1}M_{1}(\wtDR)\|\partial_{z}u^{\theta}\|_{L^{2}(\wtDR)}\right)
				\cdot\|\tilde{\Psi}^{r}_{R}\|_{L^{2}(\wtDR)}\\
				\leq& C\|\Bu\|_{L^{\infty}(\wtDR)}\left(R^{-\frac{1}{2}}+R^{-\frac{3}{2}}M_{1}\right)\|\nabla\Bu\|_{L^{2}(\wtOR)}\cdot M_{1}\|\partial_{z}\tilde{\Psi}^{r}_{R}\|_{L^{2}(\wtDR)}\\
				\leq& C\|\Bu\|_{L^{\infty}(\wtDR)}\left(R^{-\frac{1}{2}}M_{1}+R^{-\frac{3}{2}}M^{2}_{1}\right)\|\nabla\Bu\|_{L^{2}(\wtOR)}\cdot R^{\frac{1}{2}}\|u^{r}\|_{L^{2}(\wtOR)}\\
				\leq&
				C\left(R^{\frac{1}{2}}[f(R)]^{2}+R^{-\frac{1}{2}}[f(R)]^{3}\right)\|\nabla\Bu\|_{L^{2}(\wtOR)}\|\Bu\|^{2}_{L^{\infty}(\wtOR)}.
			\end{split}
		\end{equation}
		Collecting the estimates   \eqref{eqes1pre}--\eqref{eqes3pre} and due to $f(R)\leq O(R^{\beta})$ as in \eqref{growboundabe1}, where $0\leq \beta<\dfrac{1}{2}$.  According to \eqref{eqrPqw1257}, one derives
		\begin{equation}\label{esPrespd1}
			\left|\int_{R-\gamma^{\ast} f(R)}^{R}\int_{f_{1}(r)}^{f_{2}(r)}\partial_{r}P\tilde{\Psi}^{r}_{R}\,dzdr\right|\leq CR^{\frac{1}{2}}\left(f(R)\|\Bu\|_{L^{\infty}(\wtOR)}+[f(R)]^{2}\|\Bu\|^{2}_{L^{\infty}(\wtOR)}\right)\|\nabla\Bu\|_{L^{2}(\wtOR)}.
		\end{equation}
		Similarly, it holds that
		\begin{equation}\label{esPrespd2}
			\left|\int_{R-\gamma^{\ast} f(R)}^{R}\int_{f_{1}(r)}^{f_{2}(r)}\partial_{z}P\tilde{\Psi}^{z}_{R}\,dzdr\right| \leq CR^{\frac{1}{2}}\left(f(R)\|\Bu\|_{L^{\infty}(\wtOR)}+[f(R)]^{2}\|\Bu\|^{2}_{L^{\infty}(\wtOR)}\right)\|\nabla\Bu\|_{L^{2}(\wtOR)}.
		\end{equation}
		Combining the estimates  \eqref{eqesti1}--\eqref{eqestes2} and \eqref{esPrespd1}--\eqref{esPrespd2}, from
		\eqref{mueqine22new} and \eqref{eqPressesunbf}, one arrives at
		\begin{equation}\label{eqnablauuarr}
			\begin{split}
				\int_{\Omega}|\nabla\Bu|^{2}\hat{\varphi}_{R}\,d\Bx
				\leq CR^{\frac{1}{2}}\left(\|\Bu\|_{L^{\infty}(\wtOR)}+f(R)\|\Bu\|^{2}_{L^{\infty}(\wtOR)}\right)\|\nabla\Bu\|_{L^{2}(\wtOR)}.
			\end{split}
		\end{equation}
		
		If
		$\Bu$ satisfies \eqref{eqgrowinguveu1}, for every $0<\epsilon<1$,
		there is a constant $R_{0}(\epsilon)> \epsilon^{\frac{1}{\beta -1}}$, such that
		\[
		\|\Bu\|_{L^{\infty}(\wtOR)}\leq \epsilon R^{1-2\beta} \quad \text{for\ any}\ R\geq R_{0}.
		\]
		Hence the estimate \eqref{eqnablauuarr} can be written as
		\begin{equation}\label{eqA52}
			\begin{split}
				\hat{Y}(R)\leq& CR^{\frac{1}{2}}\left(\|\Bu\|_{L^{\infty}(\wtOR)}+f(R)\|\Bu\|^{2}_{L^{\infty}(\wtOR)}\right)\|\nabla\Bu\|_{L^{2}(\wtOR)}\\
				\leq& CR^{\frac{1}{2}}\left(\|\Bu\|_{L^{\infty}(\wtOR)}+f(R)\|\Bu\|^{2}_{L^{\infty}(\wtOR)}\right)[f(R)]^{\frac{1}{2}}[\hat{Y}^{\prime}(R)]^{\frac{1}{2}}
				\\
				\leq&
				C\epsilon R^{\frac{5-5\beta}{2}}[\hat{Y}^{\prime}(R)]^{\frac{1}{2}}.
			\end{split}
		\end{equation}
		Here to reach the second line, we  used \eqref{Yprimedef}.
		
		Suppose $\hat{Y}(R)>0$,   according to \eqref{inYRinbet},  $\hat{Y}(R)$ must be unbounded as $R\rightarrow +\infty$. For sufficiently large $R$, integrating \eqref{eqA52} from $R$ to $+\infty$ yields
		\[
		R^{5\beta-4}\hat{Y}(R)\leq (4-5\beta)(C\epsilon)^{2} \quad  \text{for\ any}\ 0\leq\beta<\frac{1}{2}.
		\]
		Since $\epsilon$ can be arbitrarily small, this leads to a contradiction with \eqref{inYRinbet}. Hence,   $\hat{Y}(R)\equiv0$ and so $\nabla \Bu\equiv0$. Then the no-slip boundary conditions  yield  $\Bu\equiv0$. This completes the proof  of Theorem \ref{th:01}.
	\end{proof}
	\subsection{Navier boundary conditions case}
	\label{sec42}
	From the proof that in Subsection \ref{Sec32} and Subsection \ref{sec41},  we can show the Liouville-type theorem for the Navier-Stokes system \eqref{eqsteadyns} in a growing layer supplemented with Navier boundary conditions with unbounded outlets.
	\begin{proof}[Proof  of  Theorem \ref{th:02}] The proof contains two steps.
		
		\emph{Step 1.} {\emph{Proof for Case (b) of Theorem \ref{th:02}.}}
		Following almost the same proof as that in Subsection \ref{Sec32}, one arrives at
		\begin{equation}\label{estmNaunb42}
			\begin{split}
				&\int_{\Omega}|\nabla\Bu|^{2}\hat{\varphi}_{R} \,d\Bx+2\alpha\int_{\partial\Omega} \left|\Bu\right|^2\hat{\varphi}_{R} \,dS\\
				\le & C \left[ \int_{\Omega}|\nabla\hat{\varphi}_{R}| |\nabla\Bu||\Bu| \,d\Bx+\left|\int_{\Omega}|\Bu|^{2}\Bu\cdot \nabla\hat{\varphi}_{R} \,d\Bx\right| +\left| \int_\Omega P \Bu \cdot \nabla \hat{\varphi}_R \,d\Bx\right| \right].
			\end{split}	
		\end{equation}
	For the first two terms of the right hand side of \eqref{estmNaunb42},  by    Lemma \ref{1Nabineqnoslipes}, it holds that
	\begin{equation}\label{newbasic1sec42}
		\begin{split}
			&\int_{\Omega}|\nabla\hat{\varphi}_{R}||\nabla\Bu||\Bu| \,d\Bx\\
			\leq&
			CR\cdot[f(R)]^{-1}\|\nabla\Bu\|_{L^{2}(\wtDR)}\|\Bu\|_{L^{2}(\wtDR)}\\
			\leq&
			C[f(R)]^{-1}\|\nabla\Bu\|_{L^{2}(\wtOR)}\cdot M_{1}(\wtDR)\left(\|\nabla\Bu\|_{L^{2}(\wtOR)}+\|\Bu\|_{L^{2}(\partial\wtOR\cap \partial\Omega)}\right)\\
			\leq&
			C\left(\|\nabla\Bu\|_{L^{2}(\wtOR)}+\|\Bu\|_{L^{2}(\partial\wtOR\cap \partial\Omega)}\right)^{2}
		\end{split}
	\end{equation}
	and
	\begin{equation}\label{newbasic2sec42}
		\begin{split}
			\left|\int_{\Omega}|\Bu|^{2}\Bu\cdot \nabla\hat{\varphi}_{R}\, d\Bx\right|
			=&
			[f(R)]^{-1}\left|\int_{0}^{2\pi}\int_{R-\gamma^{\ast} f(R)}^{R}\int_{f_{1}(r)}^{f_{2}(r)}|\Bu|^{2}u^{r}r\,dzdrd\theta
			\right|\\
			\leq&
			CR\cdot [f(R)]^{-1} \|\Bu\|^{2}_{L^{4}(\wtDR)}\|u^{r}\|_{L^{2}(\wtDR)}\\
			\leq&
			CR^{-\frac{1}{2}} M^{2}_{3}(\wtDR)\left(\|\nabla\Bu\|_{L^{2}(\wtOR)}+\|\Bu\|_{L^{2}(\partial\wtOR \cap \partial\Omega)}\right)^{3}\\
			\leq&
			CR^{-\frac{1}{2}} f(R)\left(\|\nabla\Bu\|_{L^{2}(\wtOR)}+\|\Bu\|_{L^{2}(\partial\wtOR\cap \partial\Omega)}\right)^{3},
		\end{split}
	\end{equation}
	where $M_{1}(\wtDR)$ and $M_3(\wtDR)$ satisfy
	\[
	C^{-1}f(R) \leq M_1(\wtDR) \leq Cf(R), 
	\ \ \ \ \ \ \ C^{-1}[f(R)]^{\frac{1}{2}}\leq M_{3}(\wtDR)\leq
	C[f(R)]^{\frac{1}{2}}.
	\]
	The analysis for the last term of \eqref{estmNaunb42} is similar to that in   Subsection \ref{sec41}. It suffices to estimate \eqref{eqrPqw1257} and \eqref{eqrPqw234}. As in Subsection \ref{Sec32}, \eqref{sec32eqA31} gives the Poincar\'e for $u^{r}$,
	\begin{equation}
		\|u^{r}\|_{L^{2}(\wtDR)}\leq M_1(\wtDR)\|\partial_{z} u^{r}\|_{L^{2}(\wtDR)}.
	\end{equation}
	This implies it holds that \eqref{eqesPre1}--\eqref{eqesPre21n}.
	Furthermore, by \eqref{Bog1were}--\eqref{eqbogindm2}, Lemma \ref{L4ineqnoslipes} and Lemma \ref{1Nabineqnoslipes}, one obtains
	\begin{equation}\label{estisec42234}
		\begin{split}
			&\left|\int_{R-\gamma^{\ast} f(R)}^{R}\int_{f_{1}(r)}^{f_{2}(r)}\left[\left(u^{r}\partial_{r}+u^{z}\partial_{z}\right)u^{r}\right]\tilde{\Psi}^{r}_{R}\,dzdr\right|\\
			\leq&
			C\|\Bu\|_{L^{4}(\wtDR)} \|(\partial_{r},\partial_{z})u^{r}\|_{L^{2}(\wtDR)}\cdot\|\tilde{\Psi}^{r}_{R}\|_{L^{4}(\wtDR)}\\
			\leq& CR^{-\frac{1}{2}}M_{3}(\wtDR)\left(\|\nabla\Bu\|_{L^{2}(\wtOR)}+\|\Bu\|_{L^{2}(\partial\wtOR\cap \partial\Omega)}\right)\cdot R^{-\frac{1}{2}}\|\nabla\Bu\|_{L^{2}(\wtOR)}\cdot
			M_{2}(\wtDR)\|\nabla\tilde{\Psi}^{r}_{R}\|_{L^{2}(\wtDR)}\\
			\leq& CR^{-1}M_{2}M_{3}\left(\|\nabla\Bu\|_{L^{2}(\wtOR)}+\|\Bu\|_{L^{2}(\partial\wtOR\cap \partial\Omega)}\right)^{2}
			\|ru^{r}\|_{L^{2}(\wtDR)}\\
			\leq&
			CR^{-1}M_{2}M_{3}
			\left(\|\nabla\Bu\|_{L^{2}(\wtOR)}+\|\Bu\|_{L^{2}(\partial\wtOR
				\cap\partial\Omega)}\right)^{2}
			\cdot R^{\frac{1}{2}}M_{1}\|\nabla\Bu\|_{L^{2}(\wtOR)}\\
			\leq&
			CR^{-\frac{1}{2}}[f(R)]^{2}\left(\|\nabla\Bu\|_{L^{2}(\wtOR)}+\|\Bu\|_{L^{2}(\partial\wtOR\cap \partial\Omega)}\right)^{3}
		\end{split}
	\end{equation}
	and
	\begin{equation}\label{eqessec42344}
		\begin{split}
			&\left|\int_{R-\gamma^{\ast} f(R)}^{R}\int_{f_{1}(r)}^{f_{2}(r)}\left[\frac{(u^{\theta})^{2}}{r}\right]\tilde{\Psi}^{r}_{R}\,dzdr\right|\\
			\leq&
			CR^{-1}\|\Bu\|^{2}_{L^{4}(\wtDR)}\cdot\|\tilde{\Psi}^{r}_{R}\|_{L^{2}(\wtDR)}\\
			\leq&
			CR^{-1}\cdot R^{-1} M^{2}_{3}(\wtDR)\left(\|\nabla\Bu\|_{L^{2}(\wtOR)}+\|\Bu\|_{L^{2}(\partial\wtOR\cap \partial\Omega)}\right)^{2}
			\cdot M_{1}(\wtDR)\|\nabla\tilde{\Psi}^{r}_{R}\|_{L^{2}(\wtDR)}\\
			\leq&
			CR^{-2}M^{2}_{3}M_{1} \left(\|\nabla\Bu\|_{L^{2}(\wtOR)}+\|\Bu\|_{L^{2}(\partial\wtOR\cap\partial\Omega)}\right)^{2}\cdot
			R^{\frac{1}{2}}M_{1}\|\nabla\Bu\|_{L^{2}(\wtOR)}\\
			\leq&
			CR^{-\frac{3}{2}}[f(R)]^{3}\left(\|\nabla\Bu\|_{L^{2}(\wtOR)}+\|\Bu\|_{L^{2}(\partial\wtOR\cap\partial\Omega)}\right)^{3}.
		\end{split}
	\end{equation}
	Combining \eqref{eqesPre1}--\eqref{eqesPre21n} and \eqref{estisec42234}--\eqref{eqessec42344}, one derives
	\[
	\begin{split}
		&\left|\int_{R-\gamma^{\ast} f(R)}^{R}\int_{f_{1}(r)}^{f_{2}(r)}\partial_{r}P\tilde{\Psi}^{r}_{R}\,dzdr\right|\\
		\leq&
		Cf(R)\|\nabla \Bu\|^{2}_{L^{2}(\wtOR)}
		+CR^{-\frac{1}{2}}[f(R)]^{2}\left(\|\nabla\Bu\|_{L^{2}(\wtOR)}+\|\Bu\|_{L^{2}(\partial\wtOR\cap \partial\Omega)}\right)^{3}.
	\end{split}
	\]
	Similarly, one has
	\[
	\begin{split}
		&\left|\int_{R-\gamma^{\ast} f(R)}^{R}\int_{f_{1}(r)}^{f_{2}(r)}\partial_{z}P\tilde{\Psi}^{z}_{R}\,dzdr\right|\\
		\leq&
		Cf(R)\|\nabla \Bu\|^{2}_{L^{2}(\wtOR)}
		+CR^{-\frac{1}{2}}[f(R)]^{2}\left(\|\nabla\Bu\|_{L^{2}(\wtOR)}+\|\Bu\|_{L^{2}(\partial\wtOR\cap \partial\Omega)}\right)^{3}.
	\end{split}
	\]
	Thus from \eqref{eqPressesunbf}, it holds
	that
	\begin{equation}\label{eqpreesnex42}
		\left|\int_{\Omega}P\Bu\cdot \nabla \hat{\varphi}_{R}\,d\Bx\right|\leq C\|\nabla \Bu\|^{2}_{L^{2}(\wtOR)}+CR^{-\frac{1}{2}}f(R)\left(\|\nabla\Bu\|_{L^{2}(\wtOR)}+\|\Bu\|_{L^{2}(\partial\wtOR\cap \partial\Omega)}\right)^{3}.
	\end{equation}
	Combining \eqref{estmNaunb42}--\eqref{newbasic2sec42} and \eqref{eqpreesnex42}, one arrives at
	\begin{equation}\label{coneqsnabu1}
		\begin{split}
			&\int_{\Omega}|\nabla\Bu|^{2}\hat{\varphi}_{R} \,d\Bx+2\alpha\int_{\partial\Omega} \left|\Bu\right|^2\hat{\varphi}_{R} \,dS \\
			\leq& C\left(\|\nabla\Bu\|_{L^{2}(\wtOR)}
			+\|\Bu\|_{L^{2}(\partial\wtOR\cap \partial\Omega)}\right)^{2}+CR^{-\frac{1}{2}} f(R)\left(\|\nabla\Bu\|_{L^{2}(\wtOR)}+\|\Bu\|_{L^{2}(\partial\wtOR\cap \partial\Omega)}\right)^{3}.
		\end{split}
	\end{equation}
	
	Let
	\begin{equation}\label{eqZRinsec42new}
		\hat{Z}(R)
		=\int_{\Omega}|\nabla\Bu|^{2}\hat{\varphi}_{R}\,d\Bx+2\alpha\int_{\partial\Omega}|\Bu|^{2}\hat{\varphi}_{R}\,dS.
	\end{equation}
	Straightforward computations give
	\[
	\begin{split}
		\hat{Z}^{\prime}(R)
		=&
		\int_{\wtOR}|\nabla\Bu|^{2}\partial_{R}\hat{\varphi}_{R}\,d\Bx+2\alpha\int_{\partial\wtOR\cap\partial\Omega}|\Bu|^{2}\partial_{R}  \hat{\varphi}_{R}\,dS\\
		\geq& \frac{1}{2}[f(R)]^{-1}\left[\int_{\wtOR}|\nabla\Bu|^{2}\,d\Bx+2\alpha\int_{\partial\wtOR\cap\partial\Omega}|\Bu|^{2}\,dS\right].
	\end{split}
	\]
	Hence \eqref{coneqsnabu1} can be written as
	\begin{equation}\label{eqestinessec42}
		\begin{split}
			\hat{Z}(R)\leq&
			C\left\{f(R)\hat{Z}^{\prime}(R)+R^{-\frac{1}{2}}[f(R)]^{\frac{5}{2}}[\hat{Z}^{\prime}(R)]^{\frac{3}{2}}\right\}\\
			\leq&
			C\left\{R^{\beta}\hat{Z}^{\prime}(R)+R^{\frac{5\beta-1}{2}}[\hat{Z}^{\prime}(R)]^{\frac{3}{2}}\right\}.
		\end{split}
	\end{equation}
	Suppose $\hat{Z}(R)$ is not identically zero,  it follows from Lemma \ref{le:differineq} that
	\[
	\varliminf_{R \rightarrow + \infty}  R^{5\beta-4}\hat{Z}(R)>0.
	\]
	An argument similar to the one used in Subsection \ref{sec41} shows that
	$\hat{Z}(R)\equiv0$, which implies $\nabla \Bu|_{\Omega}\equiv0$ and  $\Bu|_{\partial\Omega}\equiv0$. Consequently, $\Bu$ is trivial.
	This completes the proof for Case (b) of Theorem \ref{th:02}.
	
	\emph{Step 2.} {\emph{Brief proof for Case (a) of Theorem \ref{th:02}.}} From \eqref{estmNaunb42}, \eqref{eqPressesunbf} and \eqref{eqesti1}--\eqref{esPrespd2}, it is not diffcult to get
	\[
	\begin{split}
		&\int_{\Omega}|\nabla\Bu|^{2}\hat{\varphi}_{R} \,d\Bx+2\alpha\int_{\partial\Omega}\left|\Bu\right|^2\hat{\varphi}_{R} \, dS\\
		\leq&
		CR^{\frac{1}{2}}\left(\|\Bu\|_{L^{\infty}(\wtOR)}+f(R)\|\Bu\|^{2}_{L^{\infty}(\wtOR)}\right)\cdot\left(\|\nabla\Bu\|_{L^{2}(\wtOR)}+\|\Bu\|_{L^{2}(\partial\wtOR\cap \partial \Omega)}\right).
	\end{split}
	\]
	Following the same argument as that in the  proof  of Theorem \ref{th:01} yields
	$\hat{Z}(R)\equiv0$, then one can show that $\Bu\equiv 0$.  This finishes the proof  of Theorem \ref{th:02}.
\end{proof}	

{\bf Acknowledgement.}
The research was partially supported by the National Key R$\&$D Program of China, Project Number 2020YFA0712000.  The research of Wang was partially supported by NSFC grants 12171349 and 12271389. The research of  Xie was partially supported by  NSFC grants 12250710674 and 11971307, Fundamental Research Grants for Central  universities,  and  Natural Science Foundation of Shanghai 21ZR1433300, Program of Shanghai Academic Research leader 22XD1421400.
\medskip

\end{document}